\newtheorem{theorem}{Theorem}
\newtheorem*{theorem*}{Theorem}
\newtheorem*{lemma*}{Lemma}
\newtheorem{corollary}[theorem]{Corollary}
\theoremstyle{definition}
\theoremstyle{remark}
\begin{document}

\title[]{The Wasserstein Distances Between Pushed-Forward Measures with Applications to Uncertainty Quantification}
\keywords{Wasserstein, Uncertainty-Quantification, Approximation.}
\subjclass[2010]{28A10, 60A10, 65D99.} 

\author[]{Amir Sagiv}
\address{Department of Applied Mathematics, Tel Aviv University, Tel Aviv 6997801, Israel}
\email{asagiv88@gmail.com}

\begin{abstract} 
In the study of dynamical and physical systems, the input parameters are often uncertain or randomly distributed according to a measure $\varrho$. The system's response $f$ pushes forward $\varrho$ to a new measure $f_* \varrho$ which we would like to study. However, we might not have access to $f$, but to its approximation $g$. This problem is common in the use of surrogate models for numerical uncertainty quantification (UQ). We thus arrive at a fundamental question -- if~$f$ and $g$ are close in an $L^q$ space, does the measure $g_* \varrho$ approximate $f_* \varrho$ well, and in what sense? Previously, it was demonstrated that the answer to this question might be negative when posed in terms of the $L^p$ distance between probability density functions (PDF). Instead, we show in this paper that the Wasserstein metric is the proper framework for this question. For domains in $\mathbb{R}^d$, we bound the Wasserstein distance~$W_p (f_* \varrho , g_* \varrho) $ from above by $\|f-g\|_{q}$. Furthermore, we prove lower bounds for for the cases where~$p=1$ and $p=2$ (for $d=1$) in terms of moments approximation. From a numerical analysis standpoint, since the Wasserstein distance is related to the cumulative distribution function~(CDF), we show that the latter is well approximated by methods such as spline interpolation and generalized polynomial chaos (gPC).
\end{abstract}

\maketitle

\section{Introduction}

\subsection{Problem formulation} Suppose a domain $\Omega \subseteq \mathbb{R}^d$ is equipped with a Borel probability measure~$\varrho$ and that a function $f:\Omega \to \mathbb{R}$ pushes forward $\varrho$ to a new measure $\mu:\,= f_* \varrho$, i.e., $f_* \mu (B) =~\varrho (f^{-1} (B))$ for every Borel set $B\subseteq \mathbb{R}$. We wish to characterize~$\mu$, but only have access to a function $g$ which approximates $f$. If $\|f-g\|_{L^q(\Omega, \varrho)}$ is small, does $\nu :\,=g_* \varrho$ approximate $\mu$ well, and if so in what sense?

\tikzstyle{decision} = [diamond, draw, fill=blue!20, 
    text width=4.5em, text badly centered, node distance=3cm, inner sep=0pt]
\tikzstyle{block} = [rectangle, draw,  
    text width=3.5cm, text centered, rounded corners, minimum height=4em, minimum width=4cm]
\tikzstyle{line} = [draw, -latex']
\tikzstyle{cloud} = [draw, ellipse,fill=red!20, node distance=3cm,
    minimum height=2em]
    \begin{figure}[h!]
    \centering
\begin{tikzpicture}[node distance = 2cm, auto]
    \node [block] (f) {original $f$};
    \node [block, right= 4cm of f] (mu) {measure of interest, inaccessible $\mu :\,=f_* \varrho$};      
    \node [block, below= 2cm of f] (g) {surrogate $g$};      
        \node [block, right= 4cm of g] (nu) {accessible $\nu :\,=g_* \varrho$};      
	\path [line] (f) -- node {pushforward of $\varrho$} (mu);
		\path [line] (g) -- node {pushforward of $\varrho$} (nu);
		\path[line] (f)--node {approximation} (g);
		\path[line, dashed, blue] (nu)--node [blue, right]{approximation?}(mu);
	
\end{tikzpicture}
\caption{The schematic structure of the problem. If $\|f-g\|_p$ is small, how close are $\mu$ and $\nu$? In other words, is the dashed arrow ``justified"?}
\label{fig:problem}
\end{figure}
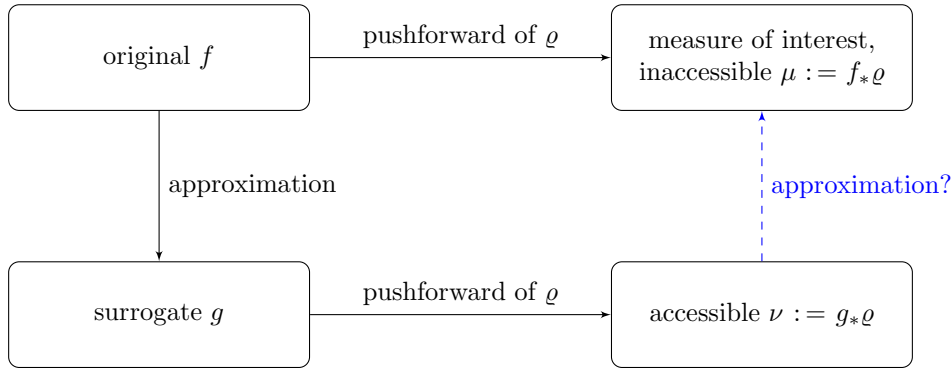
    
\subsection{Motivation}\label{sec:ode} To motivate this rather abstract question, consider the following toy example: a harmonic oscillator is described by the ordinary differential equation (ODE) $y''(t)+y=0$ with $y(0)=0$ and $y'(0)=v$. Suppose we are interested in $f(v)=~y^2(\pi/2; v)$. By solving this ODE, we know that $f(v) = [v\sin (\pi/2)]^2 = v^2$. In many other cases, however, we do not have direct access to $f$, but only to its approximation~$g$. This could happen for various reasons -- it may be that we can only compute $f(\alpha)$ numerically, or that we approximate $f$ using an asymptotic method. Following on the harmonic oscillator example, suppose we know $f(v)$ only at four given points $v_1$, $v_2$, $v_3$, and $v_4$. For any other value of~$v$, we approximate $f(v)$ by $g(v)$, which linearly interpolates the adjacent values of $f$, see Fig.\ \ref{fig:ode}(a).

The parameters and inputs of physical systems are often noisy or uncertain. We thus assume in the harmonic oscillator example that the initial speed $v$ is drawn uniformly at random from $[1,2]$. In these settings, $f(v)$ is random, and we are interested in the {\em distribution} of $f(v)$ over many experiments. Even though $f$ and $g$ look similar in Fig.\ \ref{fig:ode}(a), the probability density functions~(PDF) of $\mu=f_* \varrho$ and $\nu=g_*\varrho$, denoted by $p_{\mu}$ and $p_{\nu}$ respectively, are quite different, see Fig.\ \ref{fig:ode}(b). We would therefore like to have guarantees that $\nu$ approximates the original measure of interest~$\mu$~well.

\begin{figure}[h!]
\centering
{\includegraphics[scale=.5]{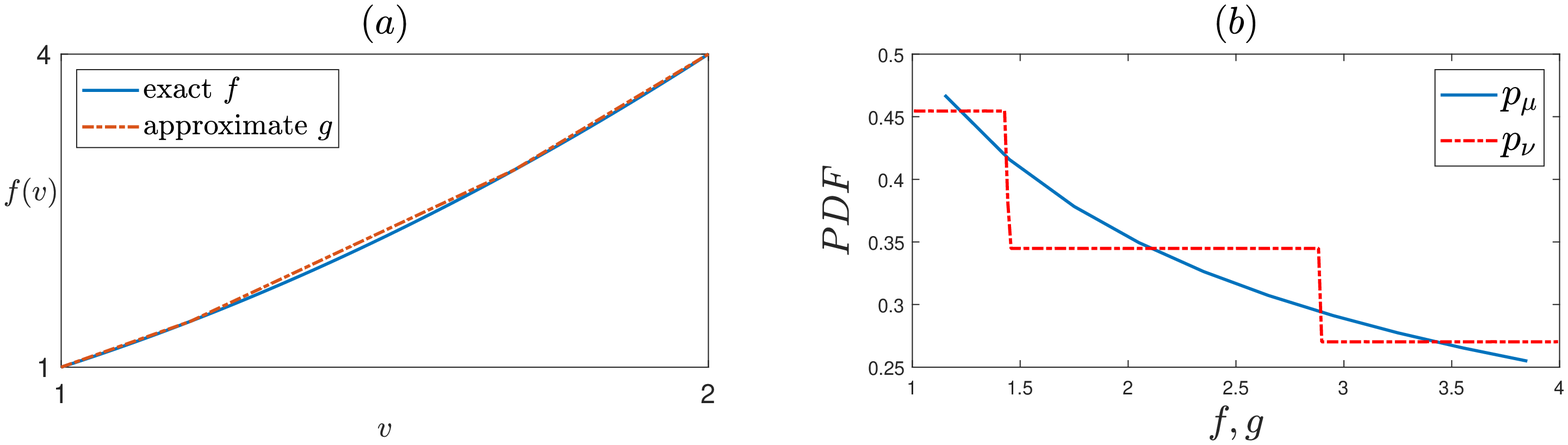}}
\caption{Solutions of $y''(t)+y=0$ with $y'(0)=v$ and $y(0)=0$. (a) $f(v)=y^2(t=\pi /2; v)$ (solid) and $g(v)$, its piecewise linear interpolant based on four exact samples (dash-dots). (b) The PDFs of $\mu = f_* \varrho$ (solid) and $\nu = g_* \varrho$~(dash-dots), where $\varrho$ is the uniform distribution on $[1,2]$.}
\label{fig:ode}
\end{figure}

It might seem obvious that the distance between $f$ and $g$ controls the distance between $\mu$ and~$\nu$. This hypothesis fails, however, when one estimates this distance using the PDFs $p_{\mu}$ and~$p_{\nu}$. For example, let $f(\alpha) =\alpha$ and $g(\alpha) = \alpha + \delta\sin ((10\delta)^{-1}\alpha)$, where $1\gg \delta > 0$. Since $\|f-g\|_{\infty} = \delta $, the two functions are seemingly indistinguishable from each other, see Fig.~\ref{fig:func_pdf}(a). Consider the case where~$\varrho$ is the Lebesgue measure on $[0,1]$	. Then, since both functions are monotonic, $p_{\mu}(y)=~1/f'(f^{-1}(y)) =~1$ and $p_{\nu}(y) = 1/g'(g^{-1}(y))$, see \cite{sagiv2018spline} for details. Hence, $p_{\nu}$ is onto~$[1.1^{-1},0.9^{-1}]\approx [0.91,1.11]$ and so $\|p_{\mu}-p_{\nu} \|_{\infty}> 0.1$, irrespectively of $\delta$, see Fig.~\ref{fig:func_pdf}(b). The lack of apparent correspondence between $\|f-g\|_q$ and $\|p_{\mu} - p_{\nu} \|_p$ for {\em any} pair of integers~$p$ and $q$ suggests that the PDFs are not a well-suited metric for the problem depicted in Fig.\ \ref{fig:problem}. {\em Instead, in this paper we propose the Wasserstein distance as the proper framework to measure the distance between $\mu$ and $\nu$.}

\begin{figure}[h!]
\centering
{\includegraphics[scale=.5]{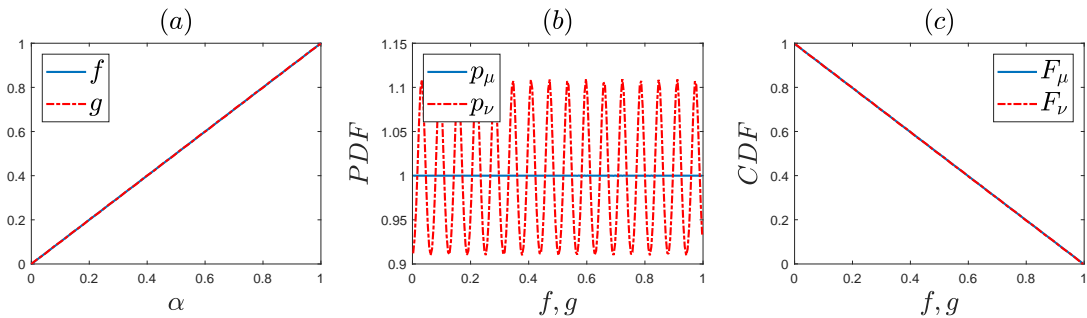}}
\caption{(a) $f(\alpha)=\alpha$ (solid) and $g(\alpha) = \alpha +10^{-3}\sin(100\alpha)$ (dash-dots). The two lines are indistinguishable. (b) The PDFs of $\mu =f_* \varrho$ and $\nu = g_* \varrho$, where $\varrho$ is the Lebesgue measure on $\Omega = [0,1]$. (c) The CDFs of the same measures. The two lines are indistinguishable.}
\label{fig:func_pdf}
\end{figure}


\subsection{Relevant literature}

The harmonic oscillator example in Sec.\ \ref{sec:ode} serves as a toy example for a broad class of problems. While the ODE $y''(t) +y=0$ can be solved explicitly, many other differential equations do not admit such closed-form solutions. Instead, we only have an approximation for the quantities of interest at our disposal. Indeed, the general settings presented above have spurred numerous papers in a field of computational science known as Uncertainty-Quantification (UQ), see e.g.,~\cite{sagiv2018spline, halder2018adaptive,
sudret2000stochastic,
xiu2010numerical, xiu2005colocation, xiu2002galerkin}.

Perhaps surprisingly, the full approximation of $\mu$ (rather than its moments alone) in these particular settings received little theoretical attention in the literature, even though it is of practical importance in diverse fields such as ocean waves \cite{ ablowitz2015interacting},  computational fluid dynamics~\cite{ chen2005uncertainty},   hydrology~\cite{ colombo2018basins}, aeronautics \cite{ zabaras2007sparse}, biochemistry \cite{le2010asynchronous}, and nonlinear optics \cite{gauri2019polar, best2017paper}. Even though $\|f-g\|_q$ does not control $\|p_{\mu} - p_{\nu} \|_p$ in general (see e.g., Fig.\ \ref{fig:func_pdf}), a previous result by Ditkowski, Fibich, and the author gives sufficient conditions for PDF approximation:
\begin{theorem}[Ditkowski, Fibich, and Sagiv \cite{sagiv2018spline}]\label{thm:l1conv_pdf}
Let $f\in C^{2}([0,1]^d)$ and let $g_h\in~C^{2} ([0,1]^d)$ be an interpolant of $f$ on a tensor grid of maximal spacing $h>0$ such that $$\|f-g_h\|_{\infty} \, , \|\nabla f-~\nabla g_h \|_{\infty} \leq Kh^{\tau} \, ,$$ where $K=K(f,d)$ and $\tau >0$ is fixed. Then $$\|p_{\mu} - p_{\nu} \|_{L^p} \leq \tilde{K}h^{\tau} \, , $$ for every $1\leq p <\infty$, with a constant $\tilde{K}=\tilde{K}(f,d,q)$.
\end{theorem}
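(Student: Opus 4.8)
The plan is to compute both densities through the change-of-variables (coarea) formula and to estimate their difference by isolating the two error mechanisms that the two hypotheses are designed to control. Writing $\rho$ for the Lebesgue density of $\varrho$, the coarea formula gives, for Lebesgue-a.e.\ value $y$,
\[
p_{\mu}(y) = \int_{f^{-1}(y)} \frac{\rho(x)}{|\nabla f(x)|}\, d\mathcal{H}^{d-1}(x) \, , \qquad p_{\nu}(y) = \int_{g_h^{-1}(y)} \frac{\rho(x)}{|\nabla g_h(x)|}\, d\mathcal{H}^{d-1}(x) \, ,
\]
where $\mathcal{H}^{d-1}$ denotes surface measure on the level set. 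In the monotone one-dimensional case this collapses to $p_{\mu}(y)=\rho(f^{-1}(y))/|f'(f^{-1}(y))|$, which already displays the two sources of discrepancy: the preimage $f^{-1}(y)$ versus $g_h^{-1}(y)$, and the Jacobian weight $|\nabla f|^{-1}$ versus $|\nabla g_h|^{-1}$.

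First I would fix a regular value $y$ and split $p_{\mu}(y)-p_{\nu}(y)$ by a triangle inequality into a Jacobian-mismatch term, in which both integrals are taken over the same surface but with the weights $|\nabla f|^{-1}$ and $|\nabla g_h|^{-1}$, and a geometric term measuring the displacement between $f^{-1}(y)$ and $g_h^{-1}(y)$. On the region where $|\nabla f|\ge c>0$, the elementary inequality $\big||\nabla f|^{-1}-|\nabla g_h|^{-1}\big|\le c^{-2}\,|\nabla f-\nabla g_h|\le c^{-2}Kh^{\tau}$ (valid for $h$ small, since then $|\nabla g_h|$ is also bounded below) handles the first term, after using the $C^{2}$ bound on $f$ to transfer $\nabla f$ from one level set to the other at the cost of $\|D^{2}f\|_{\infty}$ times the level-set displacement. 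For the second term, the hypothesis $\|f-g_h\|_{\infty}\le Kh^{\tau}$ forces $g_h^{-1}(y)$ into the tube $\{x:|f(x)-y|\le Kh^{\tau}\}$ around $f^{-1}(y)$, whose contribution is again $O(h^{\tau})$ because $|\nabla f|$ is bounded below there. Combining, one obtains a pointwise estimate $|p_{\mu}(y)-p_{\nu}(y)|\le \tilde{K}h^{\tau}$ on the set of regular values, and since $f$ and $g_h$ map the bounded domain into a bounded interval, integrating this pointwise bound over the range yields the desired $L^{p}$ control of the regular part.

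The main obstacle is the complementary set $S_h$ of values lying near a critical value of $f$, or near a value at which a level set becomes tangent to $\partial[0,1]^d$ and the number of preimage components of $f$ and $g_h$ may differ. There $|\nabla f|$ need not be bounded below, the densities may be unbounded or merely discontinuous, and the pointwise estimate breaks down; this is precisely what obstructs an $L^{\infty}$ conclusion and introduces the dependence of $\tilde{K}$ on the exponent. The plan here is twofold: show that $S_h$ has Lebesgue measure $O(h^{\tau})$, using the non-degeneracy afforded by $f\in C^{2}$ to control the set of near-critical and near-tangency values; and show that on $S_h$ the densities remain in $L^{p}$ with norms bounded independently of $h$, so that a Hölder estimate bounds the contribution of $S_h$ to $\|p_{\mu}-p_{\nu}\|_{L^p}$ for every finite $p$. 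A structural point to be exploited throughout is that $g_h$ interpolates $f$ on the tensor grid, and in particular agrees with $f$ at the corners of $[0,1]^d$, so that the discontinuities of $p_{\mu}$ and $p_{\nu}$ arising from the domain geometry are \emph{aligned} rather than merely close; reconciling this alignment with the singular-set estimate so as to recover the uniform rate $h^{\tau}$, rather than a weaker $p$-degraded rate, is the delicate heart of the argument.
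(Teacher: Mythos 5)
A framing point first: the paper never proves this theorem --- it is imported verbatim from \cite{sagiv2018spline}, and the statement as printed here is abbreviated. The paper's own discussion immediately below it (``limitations'' (1)--(4)) reveals the hypotheses that the displayed statement suppresses: the actual result assumes $|\nabla f|\geq \tau_{\rm f}>0$ on all of $[0,1]^d$ and $d\varrho(\alpha)=c(\alpha)\,d\alpha$ with $c\in C^1$. Your ``good region'' analysis --- representing the densities by the coarea/change-of-variables formula and splitting the error into a Jacobian mismatch controlled by $\|\nabla f-\nabla g_h\|_{\infty}$ plus a level-set displacement controlled by $\|f-g_h\|_{\infty}/\tau_{\rm f}$ --- is exactly the mechanism of the cited proof, and under the true hypotheses it is essentially the \emph{whole} proof, since the singular set you worry about is then empty.

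The genuine gap is the second half of your plan, which is not the ``delicate heart'' of the argument but an impossibility: if critical points are allowed, the conclusion is simply false, so no treatment of $S_h$ can close the argument. Concretely, take $d=1$, $f(x)=(x-1/2)^2$, $\varrho$ uniform on $[0,1]$. Then $p_{\mu}(y)=y^{-1/2}$ on $(0,1/4]$, which already fails to lie in $L^p$ for $p\geq 2$; this kills your claim that ``the densities remain in $L^p$ with norms bounded independently of $h$'' --- that claim fails at every non-degenerate quadratic critical point. Worse, for any interpolant whose minimum value is $\delta\neq 0$, $|\delta|\leq Kh^{\tau}$ (interpolation does not preserve the minimum or its location), $p_{\nu}$ carries its inverse-square-root singularity at $y=\delta$ rather than at $y=0$, so $\|p_{\mu}-p_{\nu}\|_{L^p}=+\infty$ for every $p\geq 2$, and even $\|p_{\mu}-p_{\nu}\|_{L^1}\sim \sqrt{|\delta|}\sim h^{\tau/2}$, i.e.\ the rate itself degrades below $h^{\tau}$. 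Your other pillar, $|S_h|=O(h^{\tau})$, also has no basis: Sard's theorem gives measure zero for the critical values of a $C^2$ function but no quantitative rate for \emph{near}-critical values, and one can make that measure decay arbitrarily slowly (countably many near-flat pieces at distinct heights). The paper itself signals all of this: its example $f_k$ (an atom at $0$, no PDF at all) and its listed limitation that the theorem ``demands $|\nabla f|\geq \tau_{\rm f}>0$'' tell you the non-degeneracy hypothesis is part of the theorem, not an artifact a cleverer proof could remove. Finally, your corner-alignment observation, while pleasant, cannot carry the weight you assign it: in $d\geq 2$ the jump/kink locations of $p_{\nu}$ (tangency values of level sets with $\partial[0,1]^d$, extrema on faces) are only $O(h^{\tau})$-close to those of $p_{\mu}$, not equal, since tensor-grid interpolation is exact only at grid points.
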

The conditions on $g$ are motivated by spline interpolation method, see Sec.\ \ref{sec:num} for further details. Theorem \ref{thm:l1conv_pdf} is, to the best of our knowledge, a first result in the direction of this paper's main question. Even so, Theorem \ref{thm:l1conv_pdf} is limited in several ways
\begin{enumerate}
\item The demand $|\nabla f|\geq \tau _{\rm f} >0$ is an arbitrary condition from an application standpoint.
\item The differentability and the pointwise derivative-approximation conditions $\|\nabla f - \nabla g \|_{\infty}\lesssim~h^{\tau}$ are strong demands which many other approximation methods do not fulfill. 
\item It is essential that the domain $\Omega$ is compact for the proof to hold.
\item Even when $d=1$, it is required that $d\varrho(\alpha) = c(\alpha) \, d\alpha$ with $c\in C^{1} (\bar{\Omega})$. For comparison, absolute continuity is a weaker condition, as it requires that $c\in C(\Omega) \cap L^1 (\Omega)$. 

\end{enumerate}

The Wasserstein distance (see Sec.\ \ref{sec:wass}) is thus proposed to measure the distance $\mu$ and $\nu$ since it does not suffer from the drawbacks of the norms $\|p_{\mu} - p_{\nu}\|_p $. Admittedly, the $L^p$ distances between the PDFs are both natural in practice and are associated with rich statistical theory; for~$p=1$, then $\|p_{\mu}-p_{\nu}\|_1$ is twice the total variation~\cite{devroye1985l1}, and $\|p_{\mu} - p_{\nu} \|_2 ^2$ is the Integrated Square Error, which is a building block in non-parametric statistics~\cite{ tsybakov2009estimate}. Nevertheless, the analysis of the norms $\|p_{\mu} - p_{\nu}\|_p $ in terms of the functions $f$ and~$g$ can be technically cumbersome; if e.g., $\varrho$ is the Lebesgue measure, then~$p_{\mu}(y)$~is proportional to~$ \int _{f^{-1} (y)} 1/|\nabla f | \, d\sigma$, where $d\sigma $ is the $(d-1)$ dimensional surface measure~\cite{sagiv2018spline}. Moreover, the distance $\|p_{\mu} - p_{\nu} \|_p$ is difficult to work with since it assumes that $\mu$ and $\nu$ have distributions. This is not always the case. For example, let $\varrho$ be the Lebesgue measure on $[0,1]$ and let $$f_k(\alpha) = \left\{ \begin{array}{ll} 0 & x\in [0,\frac{1}{2}] \, , \\
(x-\frac{1}{2})^k  & x\in[\frac{1}{2}, 1] \, ,

\end{array} \right. \qquad k\geq 1 \, . $$
Although $f_k$ is in $C^k([0,1])$, the measure $\mu _k = (f_k) _* \varrho$ is not a absolutely continuous measure and does not have a PDF since $\mu (\left\{0\right\}) = 1/2$.
It is therefore natural to look for other ways to measure the distance between $\mu$ and $\nu$. There are many ways to define distances between probabilities and measures, such as total variation, mutual information, and Kullback-Leibler divergence. The equivalencies and relationships between these norms, metrics, and semi-metrics are the topics of many studies, see e.g.,~\cite{gibbs2002choosing}.

\subsection{The Wasserstein distance}\label{sec:wass}
In order for us to choose the proper metric between $\mu$ and~$\nu$, we revisit Fig.\ \ref{fig:func_pdf}. While the two PDFs seem very different on a local scale, they are quite similar on a coarser scale. For example, $\mu ([0.3,0.4]) \approx \nu ([0.3,0.4])$ and so, if we were to ask what is the probability that the results of many experiments are between $0.3$ and $0.4$, then both $\mu$ and $\nu$ would have provided similar answers. More loosely speaking, since $p_{\nu}$ is oscillatory, the regions where $p_{\nu}>p_{\mu}$ and the regions where $p_{\nu}<p_{\mu}$ are adjacent, and therefore cancel-out each other. The PDF, on the other hand, is the derivative of the measure, and it is therefore heavily affected by local differences. Another disadvantage of the norm $\|p_{\mu} - p_{\nu}\|_q$ is that it does not take geometry into account. Consider for example a family of standard Gaussian measures with mean $t\in \mathbb{R}$, i.e., $p_{\mu ,t}(y) = \exp(-(y-t)^2)/\sqrt{2\pi}$ (see Fig.\ \ref{fig:gaus}). Then for every $t>2$, $\|p_{\mu ,t} - p_{\mu, 0}\|_{1} \approx 2$, regardless of whether $t=3$ or $t=10$ or $t=1,000$.

\begin{figure}[h!]
\centering
{\includegraphics[scale=.5]{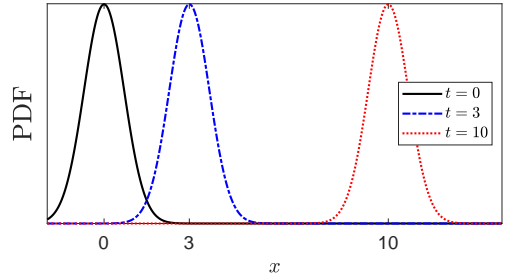}}
\caption{Gaussian distributions centered at $t=0$ (solid), $t=3$ (dash-dots), and $t=10$ (dots). Which of the latter two Gaussians is closer to the $t=0$ Gaussian in Wasserstein distance, and which in the $L^q$ sense?}
\label{fig:gaus}
\end{figure}

A widely-popular metric that overcomes some of the above issues is the Wasserstein metric. Given two probability measures $\mu$ and $\nu$ on $\mathbb{R}$ with $p\geq 1$ finite moments, the Wasserstein distance of order~$p$~is defined as
\begin{subequations}\label{eq:wass}
\begin{equation}
W_p (\mu , \nu) :\,= \left[ \inf \limits_{\gamma \in \Gamma} \int |x-y|^p \, d\gamma (x,y) \right] ^{\frac{1}{p}} \, , 
\end{equation}
where $\Gamma$ is the set of all measures $\gamma$ on $\mathbb{R}^2$ for which $\mu$ and $\nu$ are marginals, i.e.,
\begin{equation}\label{eq:marginals}
\mu (x) = \int\limits_{\mathbb{R}} \gamma(x,y) \, dy \, , \qquad \nu (y) = \int\limits_{\mathbb{R}} \gamma(x,y) \, dx \, .
\end{equation}
\end{subequations}
If the $p$-th moments of $\mu$ and $\nu$ are finite, then a minimizer exists, $W_p(\mu,\nu)$ is finite, and it is a metric \cite{santa2015optimal, villani2003topics}.
Intuitively, the Wasserstein distance with $p=1$ computes the minimal work (distance times force) by which one can transfer a mound of earth that ``looks" like $\mu$ to a one that ``looks" like~$\nu$, and it is therefore referred to as the earth-mover's distance.

As noted, some of the difficulties in approximating the PDFs arise from the inverse proportion between $p_{\mu}$ and $p_{\nu}$ and the {\em gradients} of $f$ and $g$, respectively. It is therefore natural to avoid these issues by considering the integral of the PDF, the cumulative distribution function~(CDF) $$F_{\sigma} (y) :\,= \sigma ([y,\infty)) = \int\limits_{y}^{\infty} p_{\sigma} (t) \, dt \, , $$ for any Borel measure $\sigma$. Indeed, the Wasserstein distance of order $p=1$ is related to the CDF by the following theorem.

\begin{theorem*}[Salvemini \cite{salvemini1943sul}, Vallender \cite{vallender1974calculation}]\label{thm:cdf_1}
For any two probability Borel measures $\mu$ and $\nu$ on $\mathbb{R}$,
$$W_1 (\mu ,\nu) = \int\limits_{\mathbb{R}} |F_{\mu}(x) - F_{\nu} (x)| \, dx \, .$$
\end{theorem*}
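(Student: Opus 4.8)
The plan is to establish the identity by two matching inequalities, both built on the same elementary observation that rewrites $|X-Y|$ as an integral of indicator discrepancies. I would start from the layer-cake identity
\[
|a-b| = \int_{\mathbb{R}} \left| \mathbbm{1}[a \ge x] - \mathbbm{1}[b \ge x] \right| \, dx , \qquad a,b \in \mathbb{R},
\]
whose integrand equals $1$ precisely on the interval between $a$ and $b$. For any coupling $\gamma \in \Gamma$ of $\mu$ and $\nu$, realized as a pair $(X,Y)$, Tonelli's theorem gives
\[
\int |x-y| \, d\gamma(x,y) = \int_{\mathbb{R}} \mathbb{E}_\gamma \left| \mathbbm{1}[X \ge x] - \mathbbm{1}[Y \ge x] \right| \, dx .
\]
Since $\mathbb{E}_\gamma \mathbbm{1}[X \ge x] = \mu([x,\infty)) = F_\mu(x)$ and likewise for $\nu$, the whole problem reduces to comparing, for each fixed $x$, the discrepancy $D_\gamma(x) := \mathbb{E}_\gamma|\mathbbm{1}[X\ge x] - \mathbbm{1}[Y\ge x]|$ with $|F_\mu(x) - F_\nu(x)|$.

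For the lower bound $W_1 \ge \int |F_\mu - F_\nu|$, I would note that $D_\gamma(x) = \gamma(X\ge x,\, Y<x) + \gamma(X<x,\, Y\ge x)$, whereas $F_\mu(x) - F_\nu(x) = \gamma(X\ge x,\, Y<x) - \gamma(X<x,\, Y\ge x)$, the common term $\gamma(X \ge x,\, Y \ge x)$ cancelling. Hence $D_\gamma(x) \ge |F_\mu(x) - F_\nu(x)|$ pointwise, \emph{for every} coupling; integrating in $x$ and taking the infimum over $\Gamma$ yields the inequality.

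For the upper bound it suffices to exhibit a single coupling achieving equality in $D_\gamma(x) \ge |F_\mu(x)-F_\nu(x)|$ for a.e.\ $x$. I would take the comonotone (quantile) coupling $X = Q_\mu(U)$, $Y = Q_\nu(U)$, where $U$ is uniform on $[0,1]$ and $Q_\sigma$ is the generalized inverse of the distribution function of $\sigma$; standard facts about generalized inverses guarantee $X \sim \mu$ and $Y \sim \nu$, so this is a valid element of $\Gamma$. Because $Q_\mu$ and $Q_\nu$ are both nondecreasing, each event $\{X \ge x\}$ and $\{Y \ge x\}$ is an up-set of the form $\{U \ge \mathrm{const}\}$, and two such up-sets are nested. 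Consequently at most one of $\{X \ge x,\, Y<x\}$ and $\{X<x,\, Y\ge x\}$ has positive probability, so $D_\gamma(x) = |F_\mu(x)-F_\nu(x)|$. Substituting this coupling into the Tonelli identity gives $W_1 \le \int |F_\mu - F_\nu|$, and together with the lower bound the theorem follows (the argument is valid with values in $[0,\infty]$, so it also covers the degenerate case $W_1 = \infty$).

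The main obstacle is more technical than conceptual: the rigorous handling of the quantile coupling. One must invoke the correct properties of the generalized inverse — its monotonicity and the equivalence $Q_\sigma(u) \ge x \iff u \ge F_\sigma(x)$ up to the boundary — to verify both that the marginals are correct and that the two up-sets are genuinely nested even in the presence of atoms or flat stretches of the distribution functions. A secondary point of care is the convention $F_\sigma(y) = \sigma([y,\infty))$ used here (a decreasing survival function) versus the usual $\sigma((-\infty,y])$: since the two differ by the constant $1$ and by their values on the at most countable set of atoms, neither $|F_\mu - F_\nu|$ nor its integral is affected, but the indicator bookkeeping ($\ge$ versus $\le$, open versus closed half-lines) must be kept consistent throughout.
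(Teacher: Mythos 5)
Your proof is correct. Note, however, that the paper does not prove this statement at all: it is quoted as a classical result, attributed to Salvemini and Vallender, and used as a black box (chiefly to translate $W_1$ bounds into CDF bounds in Section 4). So there is no internal proof to compare against; what you have written is essentially the standard textbook argument, and it is sound. Both halves check out: the layer-cake identity $|a-b|=\int_{\mathbb{R}}\bigl|\mathbbm{1}[a\ge x]-\mathbbm{1}[b\ge x]\bigr|\,dx$ plus Tonelli reduces everything to the pointwise comparison of $D_\gamma(x)$ with $|F_\mu(x)-F_\nu(x)|$; the lower bound is the triangle inequality after cancelling the common term $\gamma(X\ge x,\,Y\ge x)$; and the upper bound via the quantile coupling is correctly justified, since monotonicity of $Q_\mu$ and $Q_\nu$ alone makes $\{Q_\mu(U)\ge x\}$ and $\{Q_\nu(U)\ge x\}$ up-sets in $U$, hence nested, so that $D_\gamma(x)=|F_\mu(x)-F_\nu(x)|$ exactly. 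You are also right that the paper's survival-function convention $F_\sigma(y)=\sigma([y,\infty))$ changes the integrand only on a countable set compared with the usual CDF, so the identity is unaffected. One small remark: your argument in fact proves slightly more than stated, namely that the quantile coupling is an optimal coupling for $W_1$, and it handles the case $W_1=\infty$ (not excluded by the statement as written) since both inequalities hold in $[0,\infty]$.
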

This theorem reinforces the notion that $W_1$ is not as sensitive to local effects as $\|p_{\mu} - p_{\nu}\|_p$. Indeed, Fig.~\ref{fig:func_pdf}(c) shows that the two CDFs of $\mu$ and $\nu$ are almost indistinguishable. Furthermore, in the previous Gaussians example (see Fig.\ \ref{fig:gaus}), $W_1 (p_{\mu ,t}, p_{\mu,0})=t$ by direct computation of the CDFs, then, and the same can be proven for $p=2$ as well \cite{givens1984w2, mccann1997w2}. Hence, the geometric distance between the Gaussians matters in under the Wasserstein metric. Generally, Wasserstein distances are a central object in optimal transport theory \cite{santa2015optimal, villani2003topics}, and have also become an increasingly popular in such diverse fields as image processing~\cite{ ni2009local, rubner2000earth}, optimization and neural networks \cite{arjovsky2017was}, well-posedness proofs for partial differential equations with an associated gradient-flow \cite{carrillo2011global}, and numerical methods for conservation laws \cite{solem2018conv, tadmor1991local}. 
\subsection{Structure of the paper} The rest of the paper is organized as follows: Sec.\ \ref{sec:main} presents the main theoretical results of this paper. The upper bounds on $W_p$ (Theorems \ref{thm:wass_gen} and \ref{thm:lq_wp}) are presented in Sec.~\ref{sec:up}, and the lower bounds on $W_1$ (Corollary \ref{cor:w1_low}) and $W_2$ (Theorem \ref{thm:w2_low}) are presented in Sec.\ \ref{sec:low}. The proofs and some technical details of these results are presented in Sec.\ \ref{sec:pfs}. Finally, in Sec.\ \ref{sec:num} the theoretical results are applied to the numerical analysis of uncertainty quantification methods, and a numerical example is presented. 
\section{Main Results}\label{sec:main}
\subsection{Upper Bounds}\label{sec:up}
In what follows, $\Omega \subseteq \mathbb{R}^d$ is a Borel set, $\varrho$ is a Borel probability measure on~$\Omega $, $f,g:\Omega \to \mathbb{R}$ are measurable, $\mu = f_* \varrho$, $\nu = g_* \varrho$, and $L^p=L^p (\Omega, \varrho)$ for any $1\leq p \leq \infty$ unless stated otherwise.
\begin{theorem}\label{thm:wass_gen}
Let $f$ and $g$ be continuous on $\bar{\Omega}$. 
(i) If $f,g\in L^{\infty}(\Omega, \varrho)$, then for every~$p\geq 1$
$$W_p(\mu, \nu) \leq \|f - g \|_{\infty}  \, .$$
(ii) If $\Omega$ is bounded and $f,g \in L^p (\Omega, \varrho)$ then
$$W_p (\mu , \nu) \leq \|f-g\|_p \, .$$

\end{theorem}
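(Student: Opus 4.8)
The plan is to bound $W_p$ from above by exhibiting a single explicit coupling of $\mu$ and $\nu$ and computing its transport cost: since $W_p$ is defined in \eqref{eq:wass} as an infimum over the admissible set $\Gamma$, \emph{any} $\gamma \in \Gamma$ furnishes an upper bound, so it suffices to produce one good candidate. The natural choice is the deterministic (synchronous) coupling that transports the mass sitting at a point $\alpha$ through $f$ and $g$ simultaneously: I would define the map $T:\Omega \to \mathbb{R}^2$ by $T(\alpha) = (f(\alpha), g(\alpha))$, which is measurable since $f,g$ are continuous on $\bar\Omega$, and set $\gamma := T_* \varrho$.

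Before using $\gamma$, I would record that $W_p(\mu,\nu)$ is well-defined and finite in both cases. In (i), $f,g\in L^{\infty}$ forces $\mu$ and $\nu$ to be compactly supported, so all their moments are finite; in (ii), $\int_{\mathbb{R}}|x|^p\,d\mu = \int_\Omega |f|^p\,d\varrho < \infty$ by $f\in L^p$, and similarly for $\nu$, so the $p$-th moments are again finite. Next I would verify that $\gamma$ is admissible. Writing $\pi_1,\pi_2:\mathbb{R}^2\to\mathbb{R}$ for the coordinate projections, the marginals of $\gamma$ are
$$(\pi_1)_*\gamma = (\pi_1\circ T)_*\varrho = f_*\varrho = \mu\, , \qquad (\pi_2)_*\gamma = (\pi_2\circ T)_*\varrho = g_*\varrho = \nu\, ,$$
so the constraints \eqref{eq:marginals} hold and $\gamma \in \Gamma$.

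It then remains to compute the cost of $\gamma$. By the change-of-variables formula for pushforward measures,
$$\int_{\mathbb{R}^2} |x-y|^p \, d\gamma(x,y) = \int_\Omega |f(\alpha)-g(\alpha)|^p \, d\varrho(\alpha) = \|f-g\|_p^p\, ,$$
whence $W_p(\mu,\nu)^p \le \|f-g\|_p^p$; taking $p$-th roots gives part (ii) directly. For part (i) I would instead bound the integrand pointwise, $|f-g|\le \|f-g\|_\infty$ for $\varrho$-a.e.\ $\alpha$, and use that $\varrho$ is a probability measure to get $\int_\Omega |f-g|^p\,d\varrho \le \|f-g\|_\infty^p\,\varrho(\Omega) = \|f-g\|_\infty^p$, which after taking roots yields $W_p(\mu,\nu)\le \|f-g\|_\infty$.

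The argument involves no delicate estimates, so there is no genuine analytic obstacle; the proof reduces the two-dimensional transport problem to the one-dimensional quantity $\|f-g\|_p^p$ in a single line. I expect the only points requiring care to be the correct identification of the marginals of the joint-map pushforward $T_*\varrho$ and the confirmation of finite $p$-th moments so that definition \eqref{eq:wass} applies. The conceptual heart of the matter, rather than a difficulty, is simply recognizing that the diagonal coupling $T_*\varrho$ — which never splits the mass carried by a single input $\alpha$ — is admissible, and that it is typically far from optimal, which is precisely why these upper bounds leave room for the sharper lower bounds pursued later in the paper.
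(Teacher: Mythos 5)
Your proof is correct, and it takes a genuinely different route from the paper's. The paper proves the theorem by discretization: it covers $\Omega$ by a large cube $[-L,L]^d$, partitions it into boxes $I_j$ of diameter $<\eta$ chosen via uniform continuity of $f$ and $g$, considers the restricted pushforwards $\mu_j = f_*(\varrho|_{I_j})$ and $\nu_j = g_*(\varrho|_{I_j})$, invokes the subadditivity $W_p^p(\mu,\nu) \le \sum_j W_p^p(\mu_j,\nu_j)$, bounds each summand by $\bigl(\sup_{I_j}|f-g|+2\epsilon\bigr)^p \varrho(I_j)$, and then passes to the limit as the partition refines (with a separate exhaustion argument when continuity is not uniform, and a vanishing tail term over $\Omega\setminus[-L,L]^d$). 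Your synchronous coupling $\gamma = (f,g)_*\varrho$ collapses all of this into one step: the marginal identification $(\pi_i)_*\gamma$ together with the change-of-variables formula gives $W_p^p(\mu,\nu)\le \|f-g\|_p^p$ immediately, and the $L^\infty$ bound follows since $\varrho$ is a probability measure. Your argument is not only shorter but strictly more general: it uses only measurability of $f$ and $g$, so the continuity hypothesis is unnecessary, and part (ii) holds without assuming $\Omega$ bounded --- both hypotheses in the theorem statement are artifacts of the paper's discretization technique, which needs uniform continuity to control the oscillation of $f-g$ on each box and boundedness to make the Riemann-sum $\sum_j |f(t_j)-g(t_j)|^p\varrho(I_j)$ converge to $\|f-g\|_{L^p(\varrho)}^p$. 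Your attention to the admissibility of $\gamma$ and to the finiteness of $p$-th moments (so that the infimum defining $W_p$ is over a nonempty set and yields a finite metric) covers exactly the points that need care; nothing is missing.
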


This result is sharp. Let $\varrho$ be any probability measure on $[0,1]$ and let $f(\alpha) \equiv x_0$ and $g(\alpha) \equiv y_0$, for some $x_0,y_0 \in \mathbb{R}$. Then $\mu$ and $\nu$, are the Dirac delta distribution centered at~$x_0$ and $y_0$, respectively, and the only distribution $\gamma\in \Gamma$ is~$\gamma = \delta_{(x_0,y_0)}$. Hence,
$W_p ^p (\mu, \nu)  = |x_0 - y_0|^p =  \|f -g\|_{\infty} ^p$.
Furthermore, as opposed to Theorem \ref{thm:l1conv_pdf}, this theorem does not even demand that $f$ and $g$ be differentiable, and puts no restrictions on the Borel measure $\varrho$. Though this theorem is only valid for domains in $\mathbb{R}^d$, a generalization of case {\em (i)} to (infinite-dimensional) Polish spaces has been achieved by Boussaid \cite{boussaid}.

Item {\em (ii)} of Theorem \ref{thm:wass_gen} uses $L^p$ information to bound $W_p (\mu,\nu)$. In many cases, however, upper bounds on $f-g$ are known only in a specific $L^q$ space. The next theorem shows how $L^q$ error estimates can provide nontrivial upper bounds on $W_p(\mu,\nu)$ for {\em any} $p$, even if $p\neq q$.
\begin{theorem}\label{thm:lq_wp}
Under the assumptions (i)+(ii) of Theorem \ref{thm:wass_gen}, then for every $p,q\geq 1$,
$$W_p (\mu , \nu) \lesssim   \|f-g\|_{\infty} ^{\frac{p}{q+p}}\|f-g\|_q ^{\frac{q}{q+p}} \, ,$$
where $\lesssim$ denotes inequality up to a constant which depend only on $p$ and $q$.
\end{theorem}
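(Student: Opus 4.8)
The plan is to reduce everything to the $L^p$ bound already in hand and then to interpolate. Write $h := f-g$; under assumptions (i)+(ii) the function $h$ is continuous and bounded on $\bar\Omega$, so $h\in L^r(\Omega,\varrho)$ for every $r$, and Theorem~\ref{thm:wass_gen}(ii) supplies the single inequality $W_p(\mu,\nu)\le \|h\|_p$. It therefore suffices to establish the purely analytic estimate $\|h\|_p \le \|h\|_\infty^{p/(p+q)}\,\|h\|_q^{q/(p+q)}$ on the probability space $(\Omega,\varrho)$. Because $\varrho$ is a probability measure I will use two elementary facts freely: the monotonicity $\|h\|_r \le \|h\|_s$ for $r\le s$ (in particular $\|h\|_q\le\|h\|_\infty$), and Hölder's inequality for the splitting $|h|^p=|h|^q\,|h|^{p-q}$.

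The main point to recognise is that the exponents $p/(p+q)$ and $q/(p+q)$ in the claim are \emph{not} the sharp interpolation exponents, so no clever device is needed; the claim is a deliberately weakened form of the classical interpolation inequality, and the work is largely bookkeeping. I would split according to the sign of $p-q$. For $q\le p$, Hölder applied to $|h|^p=|h|^q|h|^{p-q}$ gives $\|h\|_p^p \le \|h\|_\infty^{p-q}\|h\|_q^q$, i.e. the \emph{sharp} bound $\|h\|_p \le \|h\|_\infty^{(p-q)/p}\|h\|_q^{q/p}$. For $p<q$, sharp interpolation between $L^q$ and $L^\infty$ is unavailable (the needed exponent would exceed $1$), so I would instead invoke the probability-space monotonicity $\|h\|_p\le\|h\|_q$.

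The final, only mildly delicate, step is to pass from these intermediate bounds to the stated one. In both cases the stated right-hand side dominates because it loads \emph{more} weight onto the larger factor $\|h\|_\infty$ and less onto $\|h\|_q$, while preserving the total homogeneity degree $p$. Concretely, for $q\le p$ I would write
$$
\|h\|_\infty^{\frac{p-q}{p}}\|h\|_q^{\frac{q}{p}}
= \|h\|_\infty^{\frac{p}{p+q}}\|h\|_q^{\frac{q}{p+q}}\left(\frac{\|h\|_q}{\|h\|_\infty}\right)^{\frac{q^2}{p(p+q)}}
\le \|h\|_\infty^{\frac{p}{p+q}}\|h\|_q^{\frac{q}{p+q}},
$$
the inequality following from $\|h\|_q\le\|h\|_\infty$ and the identity being a direct check on the exponents; and for $p<q$ I would write $\|h\|_q = \|h\|_q^{q/(p+q)}\|h\|_q^{p/(p+q)} \le \|h\|_q^{q/(p+q)}\|h\|_\infty^{p/(p+q)}$ using the same monotonicity. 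Combining either case with $W_p\le\|h\|_p$ closes the proof, in fact with implied constant equal to $1$, so the $\lesssim$ in the statement is comfortably met. The only genuine obstacle is conceptual rather than technical: noticing that the displayed exponents are a relaxation of the classical ones, and that the regime $p<q$ must be handled by norm monotonicity rather than by interpolation.
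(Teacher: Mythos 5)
Your proof is correct, but it follows a genuinely different route from the paper's. You reduce the theorem to part (ii) of Theorem \ref{thm:wass_gen}, $W_p(\mu,\nu)\le \|f-g\|_p$, and then establish the purely analytic estimate $\|h\|_p\le \|h\|_\infty^{p/(p+q)}\|h\|_q^{q/(p+q)}$ for $h=f-g$ on the probability space $(\Omega,\varrho)$: for $q\le p$ via the sharp H{\"o}lder interpolation $\|h\|_p\le \|h\|_\infty^{1-q/p}\|h\|_q^{q/p}$ followed by shifting exponent weight from $\|h\|_q$ onto $\|h\|_\infty$ (legitimate since $\|h\|_q\le\|h\|_\infty$ for a probability measure), and for $p<q$ via norm monotonicity; your exponent bookkeeping checks out, and the argument even yields the implied constant $1$. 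The paper's proof never invokes part (ii): it decomposes $\Omega$ along the level set $\Omega_r=\{\alpha : |f(\alpha)-g(\alpha)|\ge r\}$, applies the $L^\infty$ bound of part (i) separately to the restricted measures on $\Omega_r$ and $\Omega\setminus\Omega_r$, controls $\varrho(\Omega_r)\le \|f-g\|_q^q\, r^{-q}$ by a Chebyshev-type inequality, and finally optimizes the resulting bound $\|f-g\|_\infty^p\|f-g\|_q^q\, r^{-q}+r^p$ over $r$. Your route buys simplicity and a quantitatively stronger conclusion: constant $1$, and for $q\le p$ the sharper exponents $1-q/p$ and $q/p$, which expose the stated exponents as a deliberate weakening. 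The paper's route buys robustness: it uses only the $L^\infty$ estimate, which holds without boundedness of $\Omega$, together with $f-g\in L^\infty\cap L^q$; since boundedness of $\Omega$ enters Theorem \ref{thm:wass_gen} only through part (ii), the level-set/Chebyshev argument is the natural starting point for the extension to unbounded domains that the paper raises as an open problem after Theorem \ref{thm:gpc}, whereas your reduction is tied to part (ii) and would require that extension first.
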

This limit agrees with Theorem \ref{thm:wass_gen} in the limit $q\to \infty$ and when $q=p$ (up to a constant). Furthermore, for any $q\neq p$, the bound in Theorem \ref{thm:lq_wp} may improve the $L^{\infty}$ bound in Theorem~\ref{thm:wass_gen}, since $\varrho$ is a probability measure, $(f-g)\in L^{\infty} \cap L^{q}$, and so $\|f-g\|_q  \leq \|f-g\|_{\infty}$.

\subsection{Lower bounds}\label{sec:low}
The $W_1$ lower bound is the direct result of the Monge-Kantorovich duality, see Sec.\ \ref{sec:l1_low_pf} for details and proof.
\begin{corollary}\label{cor:w1_low}
If $f,g\in  C(\bar{\Omega})$ and $\Omega$ is bounded, then $$\left| \mathbb{E}_{\varrho} f - \mathbb{E}_{\varrho} g \right| \leq W_1 (\mu , \nu) \leq \|f-g\|_{L^1(\Omega, \varrho)} \, .$$
Moreover, if $f\geq g$ almost everywhere with respect to $\varrho$, then $$W_1 (\mu , \nu ) = \|f-g\|_{L^1(\Omega, \varrho)} \, .$$
\end{corollary}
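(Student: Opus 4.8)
The plan is to dispose of the upper bound immediately and then extract the lower bound from the Kantorovich--Rubinstein dual formulation of $W_1$. First I would note that, because $\Omega$ is bounded and $f,g$ are continuous on the compact set $\bar\Omega$, both functions are bounded; hence $\mu$ and $\nu$ have compact support, all their moments are finite, $W_1(\mu,\nu)$ is a well-defined finite quantity, and $\mathbb{E}_\varrho f,\mathbb{E}_\varrho g$ are finite. The upper bound $W_1(\mu,\nu)\le\|f-g\|_{L^1}$ is then precisely the $p=1$ instance of Theorem~\ref{thm:wass_gen}(ii), so no new work is needed there.

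For the lower bound I would invoke the dual (Monge--Kantorovich / Kantorovich--Rubinstein) representation
\begin{equation*}
W_1(\mu,\nu)=\sup_{\mathrm{Lip}(\phi)\le 1}\left(\int_{\mathbb R}\phi\,d\mu-\int_{\mathbb R}\phi\,d\nu\right),
\end{equation*}
where the supremum runs over all $1$-Lipschitz $\phi:\mathbb R\to\mathbb R$. The key point is that a lower bound follows from testing against a \emph{single} admissible competitor. By the defining change of variables for pushforward measures, $\int_{\mathbb R}\phi\,d\mu=\int_\Omega\phi(f)\,d\varrho$ and $\int_{\mathbb R}\phi\,d\nu=\int_\Omega\phi(g)\,d\varrho$, so for every $1$-Lipschitz $\phi$,
\begin{equation*}
\int_\Omega\left[\phi(f(\alpha))-\phi(g(\alpha))\right]\,d\varrho(\alpha)\le W_1(\mu,\nu).
\end{equation*}
Choosing $\phi(x)=x$ (which is $1$-Lipschitz) yields $\mathbb{E}_\varrho f-\mathbb{E}_\varrho g\le W_1(\mu,\nu)$, and choosing $\phi(x)=-x$ yields the opposite sign; together these give $\left|\mathbb{E}_\varrho f-\mathbb{E}_\varrho g\right|\le W_1(\mu,\nu)$.

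For the equality statement, suppose $f\ge g$ holds $\varrho$-almost everywhere. Then $|f-g|=f-g$ a.e., so
\begin{equation*}
\|f-g\|_{L^1}=\int_\Omega (f-g)\,d\varrho=\mathbb{E}_\varrho f-\mathbb{E}_\varrho g=\left|\mathbb{E}_\varrho f-\mathbb{E}_\varrho g\right|.
\end{equation*}
Substituting this into the chain $\left|\mathbb{E}_\varrho f-\mathbb{E}_\varrho g\right|\le W_1(\mu,\nu)\le\|f-g\|_{L^1}$ collapses it to equalities throughout, forcing $W_1(\mu,\nu)=\|f-g\|_{L^1}$.

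I do not anticipate a serious obstacle: once the duality is in hand the argument reduces to a one-line test-function computation. The only points that deserve care are (a) checking that the identity map $\phi(x)=x$ is a legitimate Lipschitz competitor in the dual problem on all of $\mathbb R$ — the finiteness of the first moments of $\mu,\nu$, guaranteed by boundedness of $\Omega$ and continuity of $f,g$, is exactly what makes $\int x\,d\mu$ and $\int x\,d\nu$ well-defined and the supremum attainable by such unbounded $\phi$ — and (b) the pushforward change-of-variables identity, which is simply the definition of $\mu=f_*\varrho$ and $\nu=g_*\varrho$.
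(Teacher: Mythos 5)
Your proposal is correct and follows essentially the same route as the paper: the upper bound is quoted from Theorem~\ref{thm:wass_gen}(ii), the lower bound comes from the Monge--Kantorovich (Kantorovich--Rubinstein) duality tested against the identity map (the paper uses the dual form with an absolute value and the single test function $w(y)=y$, which is equivalent to your use of $\phi(x)=\pm x$), and the equality case follows by collapsing the resulting chain of inequalities when $f\ge g$ $\varrho$-a.e. Your write-up is in fact slightly more explicit than the paper's, which leaves the equality statement implicit in the phrase ``combined with Theorem~\ref{thm:wass_gen} we arrive at the corollary.''
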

We note that since the upper bound is sharp (see discussion on Theorem \ref{thm:wass_gen}) and since equality might hold, the lower bound is sharp too. We further note that in the case where $\Omega$ is the unit circle, lower bounds on $W_1$ in terms of the Fourier coefficients of $f$ were proved by Steinerberger~\cite{steinerberger2018was}.

Next, to bound $W_2 (\mu,\nu)$ from below, we introduce two concepts: the Sobolev space $\dot{H}^{-1}$ and the symmetric decreasing rearrangement. For any Borel measure $\sigma$ on~$\mathbb{R}$, define the semi-norm $$\|\sigma \|_{\dot{H}^{-1}(\mathbb{R})} :\,= \sup\limits_{\|q\|_{\dot{H}^1 (\mathbb{R}) } \leq 1} |\langle q, \sigma \rangle | \, ,$$ where $\|q\|_{\dot{H}^1} ^2 = \int |q'(x)|^2 \, dx$ \cite{adams2003sobolev}. Note that $\|\sigma \|_{\dot{H}^{-1} } < \infty$ only if $\sigma (\mathbb{R})=0$. Another way to understand the Sobolev semi-norm $\dot{H}^{-1}$ and to compare it to the more frequently used $L^2$ norm is through Fourier analysis. By Plancharel Theorem $$ \|\sigma \| ^2_{L^2} = \int _{\mathbb{R}} \left|\hat{\sigma}(\xi)\right|^2 \, d\xi \, , \qquad \|\sigma \|_{\dot{H}^{-1}} ^2 = \int _{\mathbb{R}} \left|\frac{\hat{\sigma}(\xi)}{|\xi|}\right|^2 \, d\xi \,  ,$$ 
where $\hat{\sigma}$ is the Fourier transform of $\sigma$ \cite{adams2003sobolev}. Thus, if $\mu$ and $\nu$ are different only in high frequencies, then their $L^2$ difference might be much higher than their $\dot{H}^{-1}$ difference (due to the $1/|\xi|$ term in the integral). Intuitively, it means that highly local effects in $\sigma =\mu-\nu$ are ``subdued" in the negative Sobolev semi-norm. This is analogous to the way local effects in the PDFs are subdued in the $W_1$ distance, i.e., in the CDFs (see Fig.~\ref{fig:func_pdf}). As noted, this property also characterizes the Wasserstein distance, and indeed Loeper \cite{loeper2006uniqueness} and Peyre \cite{peyre2018non} related $W_2(\mu ,\nu)$ to $\|\mu -\nu \|_{\dot{H}^{-1} }$ in the following theorem:
\begin{theorem*}[Loeper \cite{ loeper2006uniqueness}, Peyre \cite{peyre2018non}]\label{thm:peyre}
Let $\mu$ and $\nu$ be probability measures on $\mathbb{R}$ with densities $p_{\mu}, p_{\nu} \in L^{\infty}(\mathbb{R})$, respectively. Then,
$$\|\mu - \nu\|_{\dot{H}^{-1}} \leq \max\left\{\|p_{\mu} \|_{\infty} , \|p_{\nu} \|_{\infty}\right\}^{\frac{1}{2}} W_2(\mu, 
\nu ) \, ,$$
\end{theorem*}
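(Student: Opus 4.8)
The plan is to prove this through the dynamic (Benamou--Brenier) formulation of $W_2$, by testing the signed measure $\mu-\nu$ against derivatives of functions transported along the Wasserstein geodesic from $\mu$ to $\nu$. By the very definition of the seminorm, $\|\mu-\nu\|_{\dot H^{-1}}=\sup_{\|q\|_{\dot H^1}\le 1}|\langle q,\mu-\nu\rangle|$ with $\langle q,\mu-\nu\rangle=\int q\,d\mu-\int q\,d\nu$, so it suffices to bound this pairing for each admissible test function $q$ and then take the supremum. It is enough to treat smooth, compactly supported $q$, which are dense in $\dot H^1$; this justifies all the integrations by parts below.

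First I would introduce the displacement interpolation $(\rho_t)_{t\in[0,1]}$, the constant-speed geodesic joining $\rho_0=\mu$ to $\rho_1=\nu$ in Wasserstein space. It carries a velocity field $v_t$ solving the continuity equation $\partial_t\rho_t+\partial_x(\rho_t v_t)=0$ weakly, and satisfies the Benamou--Brenier identity $\int_0^1\!\int |v_t|^2\rho_t\,dx\,dt=W_2^2(\mu,\nu)$. Differentiating the pairing along this flow and integrating by parts in $x$ yields
$$
\langle q,\mu-\nu\rangle=-\int_0^1\frac{d}{dt}\!\int q\,\rho_t\,dx\,dt=-\int_0^1\!\!\int q'(x)\,v_t(x)\,\rho_t(x)\,dx\,dt .
$$

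Next I would estimate the right-hand side by Cauchy--Schwarz, first in $x$ and then in $t$, together with the pointwise bound $\int |q'|^2\rho_t\,dx\le \|\rho_t\|_\infty\int|q'|^2\,dx=\|\rho_t\|_\infty\|q\|_{\dot H^1}^2$ and $\int_0^1(\int|v_t|^2\rho_t\,dx)^{1/2}dt\le (\int_0^1\!\int|v_t|^2\rho_t\,dx\,dt)^{1/2}=W_2(\mu,\nu)$:
$$
|\langle q,\mu-\nu\rangle|\le \int_0^1\Big(\int|q'|^2\rho_t\,dx\Big)^{1/2}\Big(\int|v_t|^2\rho_t\,dx\Big)^{1/2}dt\le \|q\|_{\dot H^1}\Big(\sup_{t}\|\rho_t\|_\infty\Big)^{1/2}W_2(\mu,\nu) .
$$
Taking the supremum over $\|q\|_{\dot H^1}\le 1$ then reduces the whole theorem to the single claim $\sup_{t\in[0,1]}\|\rho_t\|_\infty\le \max\{\|p_\mu\|_\infty,\|p_\nu\|_\infty\}$.

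This last density bound is the step I expect to be the main obstacle, since it is where the geometry of optimal transport genuinely enters; the $L^\infty$ hypothesis on $p_\mu,p_\nu$ is used precisely here. In one dimension it can be made completely explicit: the optimal map is the monotone rearrangement $T$, the interpolant is $T_t=(1-t)\,\mathrm{id}+tT$, and $\rho_t=(T_t)_*\mu$ has density $\rho_t(T_t(x))=ab/[(1-t)b+ta]$ with $a=p_\mu(x)$ and $b=p_\nu(T(x))$; since this weighted harmonic mean never exceeds $\max\{a,b\}$, the claim follows at once. (In general dimension this is McCann's displacement-convexity/Monge--Amp\`ere maximum principle.) Assembling the pieces gives $\|\mu-\nu\|_{\dot H^{-1}}\le\max\{\|p_\mu\|_\infty,\|p_\nu\|_\infty\}^{1/2}W_2(\mu,\nu)$. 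As a consistency check in this one-dimensional setting, duality gives the identity $\|\mu-\nu\|_{\dot H^{-1}}=\|F_\mu-F_\nu\|_{L^2}$, while $W_2^2=\int_0^1|F_\mu^{-1}-F_\nu^{-1}|^2\,ds$, and I would verify the constant by comparing these two representations directly through the same change of variables $x=T_t(\cdot)$.
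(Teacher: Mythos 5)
The paper itself does not prove this statement: it is imported verbatim from the cited works of Loeper and Peyre and used as a black box in the proof of Theorem \ref{thm:w2_low}, so there is no internal proof to compare yours against. Judged on its own, your argument is correct, and it is in substance the known proof from Peyre's paper: differentiate the dual pairing along the Benamou--Brenier geodesic, apply Cauchy--Schwarz against the kinetic energy, and reduce everything to the maximum principle $\sup_{t}\|\rho_t\|_\infty\le\max\{\|p_\mu\|_\infty,\|p_\nu\|_\infty\}$ for the displacement interpolant. Since the statement concerns measures on $\mathbb{R}$, your explicit one-dimensional verification of that maximum principle (monotone map $T$, $T'=a/b$ a.e.\ by the one-dimensional Monge--Amp\`ere relation, hence $\rho_t(T_t(x))$ is the weighted harmonic mean $ab/((1-t)b+ta)\le\max\{a,b\}$) fully covers what is needed, with no appeal to McCann's theory in general dimension. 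Two technical points should be tightened. First, in dimension one, $C_c^\infty$ is not dense in $\dot H^1$ via naive cutoffs (multiplying by a cutoff does not make the derivative error small in $L^2$); the clean fix is to use plateau truncations $q_R(x):=q(\max(-R,\min(x,R)))$, which satisfy $\|q_R'\|_{L^2}\le\|q'\|_{L^2}$ and $\langle q_R,\mu-\nu\rangle\to\langle q,\mu-\nu\rangle$ by dominated convergence (using $|q(x)-q(0)|\le\|q'\|_{L^2}|x|^{1/2}$ and the finiteness of second moments); functions constant outside a compact set are all your integration by parts requires, and this is the same device the paper employs in the footnote within the proof of Theorem \ref{thm:w2_low}. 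Second, the identity $\langle q,\mu-\nu\rangle=-\int_0^1\!\int q'v_t\rho_t\,dx\,dt$ deserves the one-line justification that $t\mapsto\int q\,\rho_t\,dx$ is Lipschitz with constant $\mathrm{Lip}(q)\,W_2(\mu,\nu)$, hence absolutely continuous with a.e.\ derivative given by the continuity equation. Neither point is a gap in the idea; both are standard, and your concluding consistency check via $\|\mu-\nu\|_{\dot H^{-1}}=\|F_\mu-F_\nu\|_{L^2}$ is also correct in one dimension.
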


\begin{figure}[h!]
\centering
{\includegraphics[scale=.5]{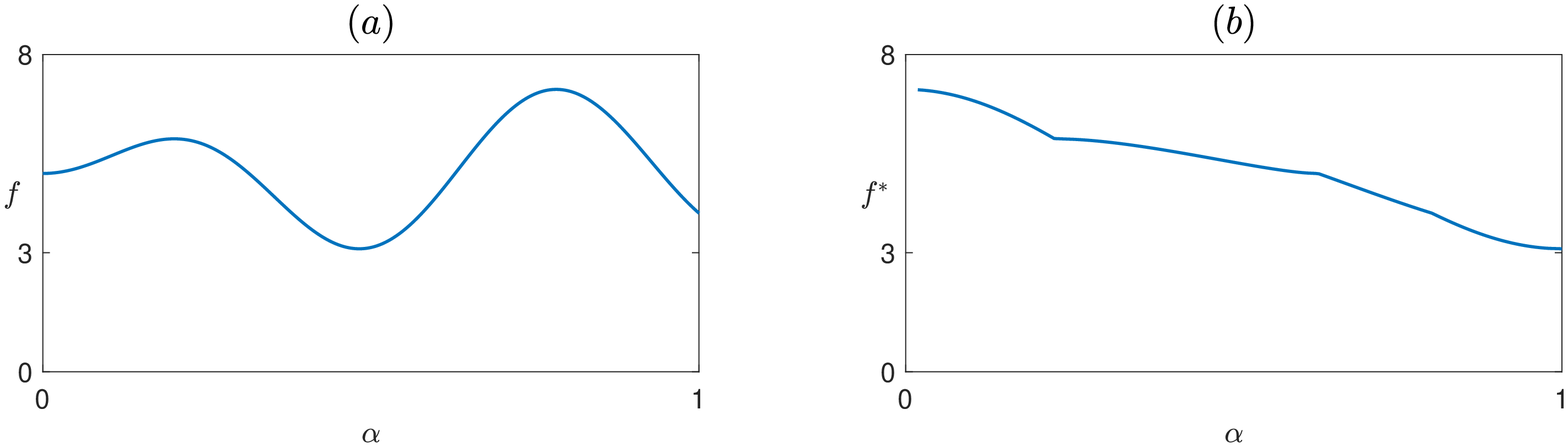}}
\caption{(a) $f(\alpha)= 5(1+\alpha \sin (10\alpha) e^{-\alpha^2})$. (b) $f^*(\alpha)$, the symmetric decreasing rearrangement of $f$, with respect to the Lebesgue measure on [0,1].}
\label{fig:decreasing}
\end{figure}

We now introduce the Symmetric decreasing rearrangement by an absolutely-continuous Borel probability measure on $\Omega \subseteq \mathbb{R}^d$ \cite{leeb2001analysis}. The symmetric decreasing rearrangement of a measurable set $A$ is $$ A^\star = \{ \alpha \in \Omega ~ |~ \varrho (B(0,1))\cdot |\alpha |^d \leq \varrho (A) \} \, ,$$
where $B(0,1)\subset \mathbb{R}^d$ is the unit ball around the origin. Next, for a measurable non-negative function $f:\mathbb{R}^d \to \mathbb{R}_+$, define the symmetric decreasing rearrangement as $$ f^{\star} (\alpha) = \int\limits_{0}^{\infty} \mathbbm{1}_{\{\alpha'\in \Omega ~|~ f(\alpha') > t \}^{\star}} (r) \, dt , \qquad r:\,=|\alpha| \, ,$$
where $\mathbbm{1}_B$ is the identifier of a set $B\subseteq \mathbb{R}^d$. For a numerical example of the symmetric decreasing rearrangement, see Fig.\ \ref{fig:decreasing}. In more intuitive terms, $f^*$ is the unique monotonic decreasing function such that $\varrho(A(f,t))=\varrho(A(f^*,t))$ for all $t\in \mathbb{R}$, where $A(f,t) :\,=~\{\alpha ~{\rm s.t.}~ f(\alpha)\geq~t \}$ are the super-level sets of $f$. Moreover, since $f^*$ is monotonic decreasing, one also have that $A(f^* ,t)$ is the interval $ [0,\varrho(A(f,t))]$. The symmetric decreasing rearrangement is an important object in real analysis \cite{leeb2001analysis}, with notable properties such as the P{\'o}lya-Szego inequality \cite{polya1951iso} $$\|f\|_p = \|f^*\|_p  \, , \qquad \|\nabla f^*\|_p \leq \|\nabla f \|_p  \, ,$$ for all $p\geq 1$. Hence, the symmetric decreasing rearrangement $f^*$ minimizes all Sobolev $W^{1,p}$ norms among the class of functions with the same super-level sets, it can be said to be the ``canonical" representative this class.

\begin{theorem}\label{thm:w2_low}
Let $I$ be a closed and bounded interval equipped with an absolutely-continuous probability measure $\varrho$ with a bounded and continuous weight function $r(\alpha)$, i.e., $d\varrho (\alpha) = r(\alpha) d\alpha$, and let $f,g\in C^1$ with $|(f^*)'|, |(g^*)'| > \tau >0$. Then, for every $k\in \mathbb{N}$ $$W_2(\mu, \nu) \geq A_k \left| \mathbb{E}_{\varrho} f^k- \mathbb{E}_{\varrho} g^k \right|\, ,$$
where $A_k$ is a positive coefficient given by $$A_k = A_k(f,g,r) = \frac{\sqrt{2k -1}}{k} \left(\max(f,g)^{2k-1}-\min(f,g)^{2k-1}\right)^{-\frac{1}{2}}  \tau ^{\frac{1}{2}} \|r\|_{\infty}^{-\frac{1}{2}} \,, $$ 
and the $\max$ and $\min$ are taken over all $x\in I$.
\end{theorem}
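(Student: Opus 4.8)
The plan is to bound $W_2$ from below by combining the Loeper--Peyre theorem with a lower bound on $\|\mu-\nu\|_{\dot H^{-1}}$, the latter obtained by testing against the monomial $y^k$. Write $\sigma := \mu-\nu$, a finite signed measure of total mass zero whose support lies in $[m,M]$, where $m := \min_{x\in I}\min(f,g)$ and $M := \max_{x\in I}\max(f,g)$ (finite since $f,g$ are continuous on the compact interval $I$). By the definition of the push-forward, $\int y^k\,d\mu = \mathbb{E}_\varrho f^k$ and $\int y^k\,d\nu = \mathbb{E}_\varrho g^k$, so the quantity we wish to bound below is exactly $\langle q,\sigma\rangle$ for any $q$ agreeing with $y^k$ on $[m,M]$. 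Since $\|\cdot\|_{\dot H^{-1}}$ is the dual norm $\sup_{\|q\|_{\dot H^1}\le 1}|\langle q,\sigma\rangle|$, a single well-chosen test function already yields a lower bound on $\|\sigma\|_{\dot H^{-1}}$.

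Concretely, I would take $q(y)=y^k$ on $[m,M]$ and extend it by the constants $m^k$ and $M^k$ outside, so that $q$ is bounded, continuous, and $q'=k y^{k-1}\mathbbm{1}_{[m,M]}\in L^2(\mathbb{R})$, hence $q\in\dot H^1(\mathbb{R})$. A direct computation gives $\|q\|_{\dot H^1}^2=\int_m^M k^2 y^{2k-2}\,dy=\frac{k^2}{2k-1}\bigl(M^{2k-1}-m^{2k-1}\bigr)$, which is positive since $2k-1$ is odd and $M>m$. Because $\operatorname{supp}\sigma\subseteq[m,M]$ and $\sigma$ has zero mass, the added constants are irrelevant and $\langle q,\sigma\rangle=\mathbb{E}_\varrho f^k-\mathbb{E}_\varrho g^k$. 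Dividing by $\|q\|_{\dot H^1}$ gives $\|\sigma\|_{\dot H^{-1}}\ge \frac{\sqrt{2k-1}}{k}\bigl(M^{2k-1}-m^{2k-1}\bigr)^{-1/2}\bigl|\mathbb{E}_\varrho f^k-\mathbb{E}_\varrho g^k\bigr|$.

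The main obstacle is the uniform density bound $\max\{\|p_\mu\|_\infty,\|p_\nu\|_\infty\}\le \|r\|_\infty/\tau$, which is exactly where the rearrangement enters. Since $f$ need not be monotone, the density of $\mu=f_*\varrho$ is in general a sum of $r(x)/|f'(x)|$ over the preimages $x\in f^{-1}(y)$ and cannot be controlled by a single derivative bound. I would instead use equimeasurability: the rearrangement $f^*$ satisfies $\varrho(\{f^*\ge t\})=\varrho(\{f\ge t\})$ for all $t$, whence $(f^*)_*\varrho = f_*\varrho = \mu$. As $f^*$ is monotone, $C^1$, and $|(f^*)'|>\tau$, the monotone change-of-variables formula gives $p_\mu(y)=r\bigl((f^*)^{-1}(y)\bigr)\big/\bigl|(f^*)'((f^*)^{-1}(y))\bigr|\le \|r\|_\infty/\tau$, and likewise for $p_\nu$ via $g^*$. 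In particular $p_\mu,p_\nu\in L^\infty(\mathbb{R})$, so the hypotheses of the Loeper--Peyre theorem are met.

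Finally I would assemble the pieces. Loeper--Peyre gives $\|\sigma\|_{\dot H^{-1}}\le \max\{\|p_\mu\|_\infty,\|p_\nu\|_\infty\}^{1/2}W_2(\mu,\nu)\le (\|r\|_\infty/\tau)^{1/2}W_2(\mu,\nu)$, hence $W_2(\mu,\nu)\ge (\tau/\|r\|_\infty)^{1/2}\|\sigma\|_{\dot H^{-1}}$. Substituting the lower bound on $\|\sigma\|_{\dot H^{-1}}$ from the test function yields precisely $W_2(\mu,\nu)\ge A_k\bigl|\mathbb{E}_\varrho f^k-\mathbb{E}_\varrho g^k\bigr|$ with the stated constant $A_k$. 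The only points needing care are the regularity of $f^*$ (so that the change of variables is valid, where $|(f^*)'|>\tau$ is used) and the verification that $q$ lies in $\dot H^1$ despite $y^k$ being unbounded, which is handled by truncating outside the compact support of $\sigma$.
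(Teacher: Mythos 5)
Your proposal is correct and follows essentially the same route as the paper's proof: test $\sigma=\mu-\nu$ against the monomial $y^k$ (extended by constants outside $[m,M]$, exactly as in the paper's footnote) to lower-bound $\|\sigma\|_{\dot H^{-1}}$, bound $\|p_\mu\|_\infty,\|p_\nu\|_\infty\le\|r\|_\infty/\tau$ via the equimeasurable monotone rearrangements, and conclude with Loeper--Peyre. The only (cosmetic) difference is that you evaluate $\langle y^k,\sigma\rangle$ directly from the definition of the pushforward, whereas the paper routes this computation through the PDF change-of-variables formula after reducing ``WLOG'' to monotone $f,g$; your version is slightly tidier but not a different argument.
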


We remark that even though $A_k = A_k (f,g,r)$ depends on $f$ and $g$, it does not depend directly on $f-g$. Hence, for a sequence $(g_n(\alpha))_{n=1} ^{\infty}$ which converges uniformly to $f$, for each $k\in\mathbb{N}$, then $A_k(f,g_n,r)$ would converge to a positive constant as $n\to \infty$. A specific example to the computation of the coefficients $A_k$ can be found in Sec.\ \ref{sec:w2_low}.

\section{Proofs of main results and technical discussion}\label{sec:pfs}
\subsection{Proof of Theorem \ref{thm:wass_gen}}\label{sec:pf_wass}

\begin{proof}
We begin with the case where $f$ and $g$ are {\em uniformly continuous} in $\Omega$. Let $\epsilon >0$, then by uniform continuity there exists $\eta = \eta (\epsilon) >0$ such that $|f(\alpha) - f(\beta)|<\epsilon$ and $|g(\alpha ) - g(\beta)|<\epsilon$ for every $\alpha,\beta \in \Omega$ such that $|\alpha -\beta |<\eta $. Let $L\in \mathbb{N}$ and partition~$[-L,L]^d$ to $M$ equal-size boxes $\{ \tilde{I}_j\}_{j=1}^M$ such that ${\rm diam} (\tilde{I}_j)<\eta$. Let $I_j = \tilde{I}_j \cap \Omega$ for every $1\leq j \leq M$ and let $I_{M+1} :\,= \Omega \setminus [-L,L]^d$. Next, let
$$ \mu _j :\,= f_* \varrho\big|_{I_j} \, , \qquad \nu _j :\,= g_* \varrho\big|_{I_j} \, ,$$
i.e., the measures induced by $f(I_j)$ and $g(I_j)$ for every $1\leq j \leq M+1$. Since $\int_{\mathbb{R}} \mu _j = \int_{\mathbb{R}}\nu _j = \varrho (I_j)$, we can transport $\mu$ to $\nu$ by transporting each $\mu _j$ to~$\nu _j$. Even though this might not be the optimal transport between $\mu$ and $\nu$, since $W_p$ is defined as an infimum over all transports then \begin{subequations}\label{eq:subinterval_wass}
\begin{equation}W_p ^p (\mu , \nu) \leq \sum\limits_{j=1}^{M} W_p ^p (\mu _j, \nu _j)  + W_p ^p (\mu _{M+1}, \nu_{M+1}) \, ,
\end{equation}
for any $1\leq j \leq M+1$, where
\begin{equation}
\begin{aligned}
W_p ^p (\mu _j , \nu _j )  :\,&= \inf\limits_{\gamma \in \Gamma_j}  \int\limits_{f(I_j)\times g(I_j)} |x-y|^p \,d\gamma(x,y)\\
&\leq \left(\sup\limits_{(x,y)\in f(I_j)\times g(I_j)}|x-y|^p \right) \varrho(I_j) \, , 
\end{aligned}
\end{equation}
\end{subequations}
where $\Gamma _j$ is the set of all measures whose marginals are $\mu _j$ and $\nu _j$. For $1\leq j \leq M$, since ${\rm diam} (I_j) < \eta$, then by uniform continuity for any $t\in I_j$ $$\sup\limits_{(x,y)\in f(I_j)\times g(I_j)}|x-y|^p \leq \left(|f(t)-g(t)|+2\epsilon \right)^p \, .$$

Here the proofs of the $L^p$ and $L^{\infty}$ bounds slightly diverge and we begin with proving that $W_p(\mu,\nu)\leq \|f-g\|_{\infty}$. For any $1\leq j \leq M$ then $ (|f(t)-g(t)|+2\epsilon )^p \leq \left(\|f-g\|_{\infty} + 2\epsilon \right)^p$ . Similarly, for $j=M+1$, the supremum in \eqref{eq:subinterval_wass} is bounded from above by $\left( \|f\|_{\infty} + \|g\|_{\infty} \right)^p$.
Combining these bounds together, we have that
$$W_p ^p (\mu , \nu) \leq \|f-g\|_{\infty} ^p\sum\limits_{j=1}^M  \varrho(I_j) +o(\epsilon)\sum\limits_{j=1}^M  \varrho(I_j) +\left( \|f\|_{\infty} + \|g\|_{\infty} \right)^p \varrho(I_{M+1})  \, . $$
Since $\varrho$ is a probability measure $\sum _{j=1} ^M \varrho(I_j) = \varrho(\Omega) = 1$ and as $L \to \infty$  the third term on the right-hand-side vanishes. Hence $W_p ^p (\mu ,\nu) \leq \|f-g\|_{\infty} ^p +2o(\epsilon )$ for every $\epsilon >0$, and so $W_p  \leq \|f-g\|_{\infty}$.

Next, consider the case where $f,g$ are continuous on $\Omega$, but not uniformly continuous. Then for any two sequences $\epsilon _n \to 0$ and $L_n \to \infty$, choose $\eta _n = \eta (\epsilon _n ,L_n)$ which satisfies the uniform continuity condition on the compact domain $\bar{\Omega} \cap [-L_n,L_n]$. Then, by partitioning this domain to sufficiently many boxes $M_n=M(\eta_n )=M(\epsilon_n , L_n)$  such that ${\rm diam} (I_{j,n} ) \leq \eta _n$, the proof holds as~$n\to \infty$.

Finally, we prove that $W_p (\mu,\nu) \leq \|f-g\|_p$. Here we require that $\Omega$ is bounded, and so we can choose $L$ such that $\Omega \subseteq~[-L,L]^d$. For $1\leq j \leq M$ we have that for some $t_j \in I_j$ then
\begin{align*}
\sup\limits_{(x,y)\in f(I_j)\times g(I_j)} |x-y|^p &\leq \left(|f(t_j)-g(t_j)| +2\epsilon\right)^p \\
&= |f(t_j)-g(t_j)|^p +o(\epsilon) \, .
\end{align*}
Substituting this inequality in \eqref{eq:subinterval_wass} yields $$W_p ^p (\mu, \nu) \leq  \sum\limits_{j=1}^M  |f(t_j)-g(t_j)|^p \varrho(I_j)  +o(\epsilon) \sum\limits_{j=1}^M \varrho(I_j) \, .$$
As the partition is refined (i.e., $M\to \infty$ and $\eta \to 0$), the first element on the right-hand-side converges to $\|f-g\|_{L^p (\varrho)}$. Since $\varrho$ is a probability measure, $\sum _{j=1}^M \varrho (I_j) = 1$, and so the second element on the right-hand-side is $o(\epsilon)$. Since this inequality is true for {\em any} $\epsilon>0$, the proof follows.

\end{proof}

\subsection{Proof of Theorem \ref{thm:lq_wp}}\label{sec:lq_wp_pf}
\begin{proof}
Define $\Omega _{r} :\,= \{ \alpha \in \Omega ~ |~ |f(\alpha)-g(\alpha)| \geq r \}$ for any $r >0$, and let $\mu _{\Omega_r}$, $\mu _{\Omega \setminus \Omega_r}$, $\nu _{\Omega_r}$, and $\nu _{\Omega \setminus \Omega_r}$ be the measure induced by $f(\Omega _r)$, $f(\Omega \setminus \Omega _r)$, $g(\Omega _r)$, and $g(\Omega \setminus \Omega _r)$, respectively. For any~$p\geq 1$,
\begin{equation}\label{eq:wp_break_lq}
W_p ^p (\mu,\nu) \leq W_p ^p (\mu _{\Omega _r} , \nu _{\Omega _{r}}) + W_p ^p (\mu _{\Omega \setminus \Omega _r } , \nu _{\Omega \setminus \Omega _{r}}) \, .
\end{equation}
The fist term on the right-hand-side of \eqref{eq:wp_break_lq} is bounded from above by $\|f-g\|_{\infty}^p\varrho(\Omega _r)$, due to Theorem \ref{thm:wass_gen}. To bound~$\varrho(\Omega _r)$, note that $$ \|f-g\|_{L^q (\Omega)} ^q \geq \|f-g\|_{L^q (\Omega_{r})} ^q \geq \varrho (\Omega _{r} ) \cdot r^q  \, ,$$
where the first inequality is due to monotonicity of $\varrho$, and the last inequality is due the continouity of $|f-g|^q$. Hence, $\varrho (\Omega_r) \leq \|f-g\|_q ^q r^{-q}$, and so the first term in the right-hand-side of \eqref{eq:wp_break_lq} is bounded from above by~$\|f-g\|_{\infty}^p  \|f-g\|_q ^q r^{-q}$.  Since the $L^{\infty}$ upper bound of Theorem \ref{thm:wass_gen} is applicable to $f$ and $g$, and since $\varrho\left( \Omega \setminus \Omega _r \right) \leq 1$, then the second term on the right-hand-side of~\eqref{eq:wp_break_lq}~is bounded from above by $\|f-g\|_{L^{\infty} (\Omega \setminus \Omega_r )} ^p \leq  r^p$. Having bounded from above both terms on the right-hand-side of \eqref{eq:wp_break_lq}, then $$W_p ^p (\mu , \nu ) \leq \|f-g\|_{\infty}^p \|f-g\|_q ^q r^{-q} + r^p \, .$$
To minimize the right-hand-side of this inequality, we derive with respect to $r$ and get that the minimum is achieved at $r_{\min} = ( qp^{-1} \|f-g\|_q ^q  \cdot \|f-g\|_{\infty}^p ) ^{1/(p+q)}$, and so 
\begin{align*}
W_p (\mu , \nu) &\leq \left[\|f-g\|_{\infty} ^p \|f-g\|_q ^q r_{\min} ^{-q} + r_{\min} ^p \right]^{\frac{1}{p}} \\ &\lesssim \|f-g\|_{\infty}^{\frac{p}{q+p}}  \|f-g\|_{q}^{\frac{q}{q+p}} \, . 
\end{align*}

\end{proof}

\subsection{Proof of Corollary \ref{cor:w1_low} }\label{sec:l1_low_pf}
\begin{proof}
The Monge-Kantorovich duality states that \cite{villani2003topics}
$$W_1 (\mu ,\nu) = \sup \left\{\left| \int\limits_{\mathbb{R}} w(y) \, (d\mu (y) - d\nu (y))\right| ~~: ~~ L
(w) \leq 1 \right\} \, ,$$

where $L(w)$ is the Lipschitz constant of $w$. So, to prove a non-trivial lower bound for $\mu = f_* \varrho$ and $\nu =g_* \varrho$, it is sufficient to provide {\em any function} $w$ for which the integral is not zero. Let $w(y) = y$. Since $L(w)=1$, then $W_1(\mu , \nu) \geq | \int_{\mathbb{R}} y \, d\mu (y) - \int_{\mathbb{R}} y \, d\nu (y) |$, which, by change of variables, means that $W_1 (\mu ,\nu ) \geq | \int_{\Omega} (f(\alpha)-g(\alpha)) \, d\varrho (\alpha) | $. Combined with Theorem \ref{thm:wass_gen} we arrive at the corollary.
\end{proof}
\subsection{Proof of Theorem \ref{thm:w2_low}}\label{sec:w2_low}
\begin{proof}
By definition of the symmetric decreasing rearrangement, $\mu = f^* _* \varrho$ and $\nu = g^*  _* \varrho$. Moreover, since the theorem requires that $|(f^*)'|, |(g^*)'|>\tau :\, >0$, we can assume without loss of generality that $f$ and $g$ are strongly monotonically decreasing. Next, we have the following standard lemma (for proof, see e.g., \cite{sagiv2018spline}):
\begin{lemma*}
Let $h\in C^1(I)$ be piecewise monotonic, let $d\varrho(\alpha) = r(\alpha) d\alpha$ where $r$ is continuous in $\Omega$. Then the PDF of the measure $\sigma = h_* \varrho$ is given by $$p_{\sigma}(y)~=~\sum\limits_{\alpha \in h^{-1} (y)} \frac{r(h^{-1}(y))}{|h'(h^{-1} (y))|} \,  , \qquad  y \in {\rm range} (h) \, .$$
\end{lemma*}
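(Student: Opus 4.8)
The plan is to prove the transformation formula via the weak (test-function) characterization of the pushforward, reducing to the classical one-dimensional change of variables on each monotonic branch of $h$, and then to recognize the branch contributions as a sum over the fiber $h^{-1}(y)$.

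First I would exploit piecewise monotonicity. Since $h\in C^1(I)$ is piecewise monotonic, I partition the interval $I$ into finitely many subintervals $I_1,\dots,I_N$, overlapping only at finitely many endpoints, on whose interiors $h$ is monotonic; write $h_j:=h|_{I_j}$. Because the shared endpoints form a finite set and $\varrho$ is absolutely continuous, they carry no $\varrho$-mass, so $\varrho=\sum_{j=1}^N \varrho|_{I_j}$ and hence $\sigma=h_*\varrho=\sum_{j=1}^N (h_j)_*(\varrho|_{I_j})$, since $h^{-1}(B)\cap I_j = h_j^{-1}(B)$ up to a $\varrho$-null set for every Borel $B$.

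Next I would verify the single-branch formula by testing against an arbitrary bounded continuous $\phi$. On a branch where $h'\neq 0$, $h_j$ is a $C^1$ homeomorphism onto $h(I_j)$ with $C^1$ inverse $h_j^{-1}$, and by the definition of pushforward together with $d\varrho=r\,d\alpha$ we have $\int \phi\, d(h_j)_*(\varrho|_{I_j}) = \int_{I_j}\phi(h(\alpha))\, r(\alpha)\, d\alpha$. The substitution $y=h_j(\alpha)$, $\alpha=h_j^{-1}(y)$, $d\alpha = dy/|h'(h_j^{-1}(y))|$ — with the absolute value absorbing the orientation on decreasing branches — turns this into $\int_{h(I_j)} \phi(y)\, r(h_j^{-1}(y))/|h'(h_j^{-1}(y))|\, dy$, identifying the branch density. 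Summing over $j$ gives, for every test function, $\int \phi\, d\sigma = \int \phi(y)\,\big[\textstyle\sum_j \mathbbm{1}_{h(I_j)}(y)\, r(h_j^{-1}(y))/|h'(h_j^{-1}(y))|\big]\, dy$. The preimages $h_j^{-1}(y)$ over those branches containing $y$ are exactly the elements of $h^{-1}(y)$, so the bracketed integrand is $\sum_{\alpha\in h^{-1}(y)} r(\alpha)/|h'(\alpha)|$; by uniqueness of the density this equals $p_\sigma(y)$ for a.e.\ $y\in{\rm range}(h)$. (Equivalently, one may differentiate the CDF $F_\sigma(y)=\varrho(\{h\le y\})$ branch by branch, matching the convention of Sec.~\ref{sec:wass}.)

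The main obstacle is the set of critical points where $h'=0$, at which the substitution degenerates and the stated formula would blow up. I would dispose of this by discarding the critical values $h(\{h'=0\})$, which form a Lebesgue-null subset of $\mathbb{R}$ (in one dimension, continuity of $h'$ forces the image of the critical set to be null), so the weak identity $\int \phi\, d\sigma=\int \phi\, p_\sigma\, dy$ is unaffected and $p_\sigma$ is correctly determined for a.e.\ $y$. A secondary, purely bookkeeping point is tracking the finitely many branch endpoints and the orientation of decreasing branches so that the Jacobian enters as $|h'|$; this is routine once the branches are fixed.
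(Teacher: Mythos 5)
The paper itself offers no proof of this lemma---it is invoked as standard, with a pointer to \cite{sagiv2018spline}---so your argument stands on its own merits. Its core is the standard route and is executed correctly: the finite decomposition into monotonic branches $h_j$ (with the finitely many shared endpoints carrying no $\varrho$-mass, since $\varrho$ is absolutely continuous), the weak characterization $\int \phi\, d(h_j)_*(\varrho|_{I_j}) = \int_{I_j}\phi(h(\alpha))\,r(\alpha)\,d\alpha$, the substitution $y=h_j(\alpha)$ producing the Jacobian $|h'|$ on each branch, and the identification of $\sum_j \mathbbm{1}_{h(I_j)}(y)\, r\bigl(h_j^{-1}(y)\bigr)/\bigl|h'\bigl(h_j^{-1}(y)\bigr)\bigr|$ with the fiber sum over $h^{-1}(y)$, followed by uniqueness of densities. (You also correctly read the statement's slightly abusive notation as $\sum_{\alpha\in h^{-1}(y)} r(\alpha)/|h'(\alpha)|$.) In the setting where the paper actually uses the lemma---inside the proof of Theorem \ref{thm:w2_low}, where $|(f^*)'|,|(g^*)'|>\tau>0$---there are no critical points and your proof is complete.

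There is, however, a genuine flaw in your critical-point paragraph: your disposal of $\{h'=0\}$ is one-sided. Nullity of the critical \emph{values} $h(\{h'=0\})$ controls only the range side; the weak identity can very much be affected on the domain side, because $\int\phi\,d\sigma$ contains the term $\int_{\{h'=0\}}\phi(h(\alpha))\,r(\alpha)\,d\alpha$, which vanishes only if $\varrho(\{h'=0\})=0$. Under the stated hypotheses this can fail: let $K\subset[0,1]$ be a fat Cantor set, let $h'\geq 0$ be continuous and vanish exactly on $K$, and set $h(x)=\int_0^x h'(t)\,dt$. Then $h\in C^1$ is strictly increasing (one monotonic piece), yet $h_*\varrho$ places mass $\varrho(K)>0$ on the Lebesgue-null set $h(K)$; hence $\sigma$ has a singular part, no PDF exists, and your claimed identity $\int\phi\,d\sigma=\int\phi\,p_\sigma\,dy$ is false---the defect being exactly the discarded domain-side contribution. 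So ``discarding the critical values'' does not close the argument; you must additionally assume (as the lemma implicitly does, and as holds automatically when the branches are maximal intervals with $h'\neq 0$ on their interiors, so that $\{h'=0\}$ is finite) that $\{h'=0\}$ is $\varrho$-null. One sentence adding that hypothesis and observing that it kills the term $\int_{\{h'=0\}}\phi(h(\alpha))\,r(\alpha)\,d\alpha$ repairs the proof; as written, the justification given for the key step is incorrect.
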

Hence, by definition and the above lemma
\begin{align*}
\|\mu - \nu\|_{\dot{H}^{-1}} &= \sup\limits_{\|q\|_{\dot{H}^{1}} \leq 1}\int\limits_{\mathbb{R}} q(y)\, (p_{\mu}(y) - p_{\nu} (y)) \, dy \\
&= \sup\limits_{\|q\|_{\dot{H}^{1}} \leq 1}\left| \int\limits_{\mathbb{R}} q(y) \frac{r(f^{-1}(y))}{f'(f^{-1}(y))} \, dy - \int\limits_{\mathbb{R}} q(y) \frac{r(g^{-1}(y))}{g'(g^{-1}(y))} \, dy\right| \, .
\end{align*}
Consider the first integral under the supremum. By change of variables $y= f(x)$, we have 
\begin{align*}
\int\limits_{\mathbb{R}} q(y) \frac{r(f^{-1}(y))}{f'(f^{-1}(y))} \, dy &= \int\limits_{I} q\circ f (x) \frac{r(x)}{f'(x)} f'(x) \,dx\\
&=\int\limits_{I} q\circ f \, d\varrho (x) \,. 
\end{align*}
Doing the respective change of variable for the second integral under the supremum, we have 
\begin{equation}\label{eq:sup_hmin1}
 \|\mu - \nu\|_{\dot{H}^{-1}}  =\sup\limits_{\|q\|_{\dot{H}^{1}} \leq 1}\left| \int\limits_I \left( q\circ f (x) - q\circ g (x)\right) \, d\varrho(x) \right|  .
\end{equation}

For ease of notations, denote $M=\max_{x\in I} \left\{f(x),g(x) \right\}$ and $m =\min_{x\in I} \left\{f(x),g(x) \right\}$. Since $f$ and $g$ are continuous on a closed bounded interval, both $m$ and $M$ are finite. Fix $k\in \mathbb{N}$, and let $q_k (x) = c_k x^k $, where the normalization constant $c_k :\, = (\sqrt{2k-1}/k)(M^{2k-1} - m^{2k-1} )^{-1/2}$ is chosen so that $\|q_k \|_{\dot{H}^1[m,M]} = 1$.\footnote{It might seem that the choice of the interval $[m,M]$ is made ad-hoc. However, this proof can be carried out in the space $\dot{H}^{-1}(\mathbb{R})$ regardless, by the following construction: extend $q_k(y)$ to $\mathbb{R}$ by setting $q_k(y) = q_k(m)$ for $y<m$ and $q_k(y)=q_k(M)$ for $y>M$. Since outside $[m,M]$, $q_k' \equiv 0$, then $\|q_k\|_{\dot{H}^{1}(\mathbb{R})} =\|q_k\|_{\dot{H}^{1}([m,M])}$, and $\langle q_k, \mu-\nu \rangle$ is unchanged too since $\mu-\nu$ is supported only on $[m,M]$. Our choice is also consistent with the result by Peyre \cite{peyre2018non}, since these also "take place" on the supports of $\mu$ and $\sigma$.}  Hence, substituting $q_k$ in~\eqref{eq:sup_hmin1}~for every $k\in \mathbb{N}$
\begin{align*}
\| \mu - \nu \| _{\dot{H}^{-1}}  &\geq \left| \int\limits_{I} (q_k \circ f(x) - q_k\circ g(x)) \, d\varrho (x) \right| \\
&= c_k \left|\int\limits_{I} f^{k} (x) - g^{k} (x) \, d\varrho (x) \right| \\
&= c_k \left| \mathbb{E}_{\varrho} f^k - \mathbb{E}_{\varrho} g^k \right | \, .
\end{align*}

Finally, to bound $W_2$ from below we need Loeper and Peyre's theorem, and so we need to compute $\|p_{\mu}\|_{\infty}$ and $\|p_{\nu} \|_{\infty}$. As noted, since $f=f^*$ is strictly decreasing, it is also continuously differentiable almost everywhere. Hence. by the result noted above, $p_{\mu}= r(f^{-1}(y))/|f'(f^{-1} (y))|$ almost everywhere, and so $\|p_{\mu}\|_{\infty}
\leq \tau  ^{-1} \|r\|_{\infty}$. Since the same holds for $g$ and $\nu$ as well, we substitute in Loeper's and Peyre's bound and get that
\begin{align*}
W_2 (\mu, \nu) &\geq [\max \{ \| p_{\mu} \|_{\infty} , \| p_{\nu} \| _{\infty} \}]^{-\frac{1}{2}} \|\mu - \nu \|_{\dot{H}^{-1}}\\
& \geq [\tau  \|r\|_{\infty} ^{-1}]^{\frac{1}{2}}\|\mu - \nu \|_{\dot{H}^{-1}} \\
&\geq \tau ^{\frac{1}{2}} \|r\|_{\infty}^{-\frac{1}{2}}  c_k \left| \mathbb{E}_{\varrho} f^k - \mathbb{E}_{\varrho} g^k \right | \, .
\end{align*}

\end{proof}

We complement the proof by an example of a direct computation of the coefficients $A_k$. Let $f(\alpha)=3\alpha -3$, $g(\alpha) = 2\alpha - 2$ and $\varrho$ is the Lebesgue measure on~$[0,1]$, then by direct computation we have that $M=0$, $m=-3$, $\|r\|_{\infty}=1$, $\tau = 2$, and so
$$A_k = \frac{\sqrt{2k+1}}{k}3^{-k+\frac{1}{2}}2^{\frac{1}{2}} \cdot 1 \,  , \qquad k\in \mathbb{N} \, .$$

\section{Convergence of uncertainty-quantification methods and numerical examples}\label{sec:num}

We apply the main theoretical results of this paper to the analysis of uncertainty quantification~(UQ)~methods. In many applications, one can only compute the quantity of interest~$f(\alpha)$ for a finite subset of $\alpha$ values~$\{\alpha_j \}_{j=1}^N$. To compute $\mu = f_* \varrho$, we first use these sampled values $\{f(\alpha_j)\}_{j=1}^N$ to construct an approximate function $g(\alpha)$, and then we approximate $\mu \approx \nu = g_* \varrho$, see Fig.~\ref{fig:problem}.
This measure-approximation problem is characterized by the following trade-off: The computational cost comes from direct computation of the samples $\{f(\alpha _j)\}_{j=1}^N$, \footnote{Since $g$ is given in closed form, e.g., by a polynomial, it is computationally cheap to estimate the measure $\nu = g_* \varrho$. Computing $f(\alpha _j)$, on the other hand, might involve a full numerical solution of a PDE.} and so it {\em increases} linearly with $N$. On the other hand, we expect the approximation error to {\em decrease} with the sample size~$N$, i.e., as we improve the sampling resolution. The question is, therefore, how to construct~$g$~such that $\mu$ is accurately approximated with a small sample size $N$.

In terms of numerical analysis, the main result of this paper is that upper bounds on $\|f-g\|_q$ {\em do guarantee} an upper bound on the Wasserstein distances $W_p(\mu, \nu)$. This in turn immediately implies an upper bound on the $L^1$ distance between the CDFs, due to the previously-noted Salvemini-Vallender identity $W_1 (\mu, \nu) = \|F_{\mu} - F_{\nu} \|_1$ \cite{ vallender1974calculation}. 

The upper bounds on the Wasserstein-error stand in sharp contrast to the $L^q$ errors between the PDFs, since in general an upper bound on $\|f-g\|_q$ does not guarantee an upper bound on $\|p_{\mu}-p_{\nu}\|_{L^p}$, for any finite $p$ and $q$ \cite{sagiv2018spline}. We therefore see that the way we define the approximation-error in this problem is not a mere technicality, but rather determines the results of the convergence analysis. Furthermore, we see that CDFs are ``easier" to approximate than PDFs, in the sense that the it is easier to guarantee their efficient approximation.  

We demonstrate the applicability of our theory for two approximation methods (surrogate models), spline interpolation and generalized Polynomial Chaos~(gPC).

\subsection{Spline interpolation.}  Given an interval $\Omega = [\alpha_{\min},\alpha_{\max}]$ and grid-points $\alpha_{\min} = \alpha_{1} < \alpha _2 < \cdots <\alpha _N = \alpha_{\max}$, an interpolating $m$-th order spline $g(\alpha) \in C^{m-1}(\Omega)$ is a piecewise polynomial of order $m$ that interpolates~$f(\alpha)$ at the grid-points, endowed with some additional boundary conditions so that it is unique. See \cite{deboor1978splines, prenter2008splines} for comprehensive expositions on splines, see~\cite{rice1978tensor, schultz1969spline} for their extension to multidimensional domains via tensor-products, and see \cite{beck2019iga, halder2018adaptive} for their applicability to UQ problems. Since Theorem \ref{thm:spline_was} is directly applicable to spline interpolation \cite{sagiv2018spline}, if $g$ is the spline interpolant of $f$, then the PDFs of $\mu$ and $\nu$ are close, i.e., $\|p_{\mu}-p_{\nu} \|_{L^p}$ is bounded from above for any $1\leq p < \infty$. We show that in these settings, the Wasserstein distance between the measures is also bounded from above.	

\begin{theorem}\label{thm:spline_was}
Let $f\in C^{m+1}([0,1]^d)$, let $g(\alpha)$ be its (tensor-product) spline interpolant of order $m$ on a (tensor-product) grid of maximal grid size $h$, and let $\varrho$ be a probability Borel measure. Then, for every $p\geq 1$, $$W_{p}(\mu,\nu ) \lesssim h^{m+1} \approx N^{-\frac{m+1}{d}}   \, , \qquad \|F_{\mu} - F_{\nu}\|_1 \lesssim N^{-\frac{m+1}{d}} \, ,$$
where $N$ is the total number of interpolation points, and where $\lesssim$ and $\approx$ denote inequality and equality up to constants independent of $h$ and $N$, respectively.
\end{theorem}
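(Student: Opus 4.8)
The plan is to combine the classical sup-norm error estimate for spline interpolation with the upper bound of Theorem \ref{thm:wass_gen}(i), and then convert the mesh-size bound into a sample-size bound. The argument is short because the analytic heavy lifting is already done: once $\|f-g\|_\infty$ is controlled, the Wasserstein distance is controlled for free, with no transport plan or optimization left to perform.

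First I would invoke the standard approximation theory of splines. For a one-dimensional $m$-th order interpolating spline $g$ (a piecewise polynomial of degree $m$, globally $C^{m-1}$) of a function $f\in C^{m+1}$, the interpolation error obeys $\|f-g\|_\infty \leq C\,\|f^{(m+1)}\|_\infty\, h^{m+1}$ with $C$ independent of $h$; see \cite{deboor1978splines, prenter2008splines}. For the tensor-product interpolant on $[0,1]^d$ one applies this estimate coordinate-by-coordinate: writing the tensor-product operator as a composition $P_1\cdots P_d$ of the $d$ one-dimensional interpolation operators, telescoping $I-P_1\cdots P_d = \sum_i P_1\cdots P_{i-1}(I-P_i)$, and using the $L^\infty$-stability of the one-dimensional operators together with $f\in C^{m+1}([0,1]^d)$, one obtains
$$\|f-g\|_\infty \leq \tilde{C}\, h^{m+1}, \qquad \tilde{C}=\tilde{C}(f,m,d),$$
with $h$ the maximal grid spacing.

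Next I would apply Theorem \ref{thm:wass_gen}(i). Since $\Omega=[0,1]^d$ is bounded, $f$ and $g$ are continuous on $\bar\Omega$ and hence belong to $L^\infty(\Omega,\varrho)$ for any Borel probability measure $\varrho$. Therefore, for every $p\geq 1$,
$$W_p(\mu,\nu)\leq \|f-g\|_\infty \leq \tilde{C}\, h^{m+1}.$$
To pass from $h$ to $N$, note that a tensor-product grid on $[0,1]^d$ with maximal spacing $h$ has $N\approx h^{-d}$ points, so $h\approx N^{-1/d}$ and consequently $W_p(\mu,\nu)\lesssim N^{-(m+1)/d}$. The CDF estimate then follows immediately from the Salvemini--Vallender identity $W_1(\mu,\nu)=\|F_\mu-F_\nu\|_1$: specializing the displayed bound to $p=1$ gives $\|F_\mu-F_\nu\|_1 = W_1(\mu,\nu)\lesssim N^{-(m+1)/d}$.

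The main---and essentially only---obstacle is establishing the sup-norm spline estimate at the genuine order $h^{m+1}$ in the multidimensional tensor-product setting under the mere hypothesis $f\in C^{m+1}$; this rests on the classical degree-$m$ interpolation bound together with the (nontrivial but standard) $L^\infty$-stability of spline interpolation needed to make the telescoping argument uniform in the mesh. Once that fact is in hand, the Wasserstein and CDF conclusions are direct consequences of the already-proved upper bound theorem and the Salvemini--Vallender identity, with no further measure-theoretic or variational work required.
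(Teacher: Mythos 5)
Your proposal is correct and follows essentially the same route as the paper: the classical one-dimensional de Boor/Hall--Meyer sup-norm estimate, extended to tensor-product splines (the paper defers this step to Schultz's reference where you sketch the telescoping argument), then Theorem \ref{thm:wass_gen}'s $L^\infty$ bound to get $W_p(\mu,\nu)\leq\|f-g\|_\infty\lesssim h^{m+1}\approx N^{-(m+1)/d}$, with the CDF estimate from the Salvemini--Vallender identity. No substantive difference from the paper's argument.
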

See Sec.\ \ref{sec:spline_pf} for the proof. Theorem \ref{thm:spline_was} is stronger than Theorem \ref{thm:l1conv_pdf} in three aspects. First, Theorem~\ref{thm:spline_was}~holds for a broader function class than the application of Theorem \ref{thm:l1conv_pdf} to splines, since it does not require that $|\nabla f| > \tau _{f} > 0$, or even that the underlying measure~$\varrho$~would be absolutely continuous. Second, Theorem \ref{thm:spline_was} is non-trivial even for those functions for which Theorem~\ref{thm:l1conv_pdf} does apply. To obtain a ``trivial" upper bound, note that for any two probability measures of $\mu$ and $\nu$ with PDFs $p_{\mu}$ and $p_{\nu}$, then $$W_1 (\mu ,\nu) \leq \frac{1}{2}{\rm diam} (\Omega)\cdot \|p_{\mu} - p_{\nu} \|_1 \, ,$$
where ${\rm diam}(\Omega)$ is the diameter of $\Omega = {\rm supp} (\mu) \cup {\rm supp} (\nu)$ \cite{gibbs2002choosing}. Since $f$ and $g$ are continuous on a compact set, they are bounded, and so the supports of $\mu$ and $\nu$ are bounded as well. Hence, ${\rm diam} (\Omega)<\infty$, and so by Theorem \ref{thm:l1conv_pdf}, $W_1 (\mu ,\nu) \leq Kh^{m}$. Theorem \ref{thm:spline_was}, however, guarantees an additional order of accuracy and so non-trivially improves the previous results.\footnote{Unfortunately, Theorem \ref{thm:spline_was} cannot improve the $L^1$ bound in Theorem~\ref{thm:l1conv_pdf} since, in general, $\|p_{\mu} - p_{\nu} \|_1 \lesssim  W_1(\mu , \nu)$ only for finite spaces~\cite{gibbs2002choosing}. } Finally, Theorem~\ref{thm:spline_was} applies not only for $p=1$ but for all $p\geq1$.

\begin{figure}[h!]
\centering
{\includegraphics[scale=.5]{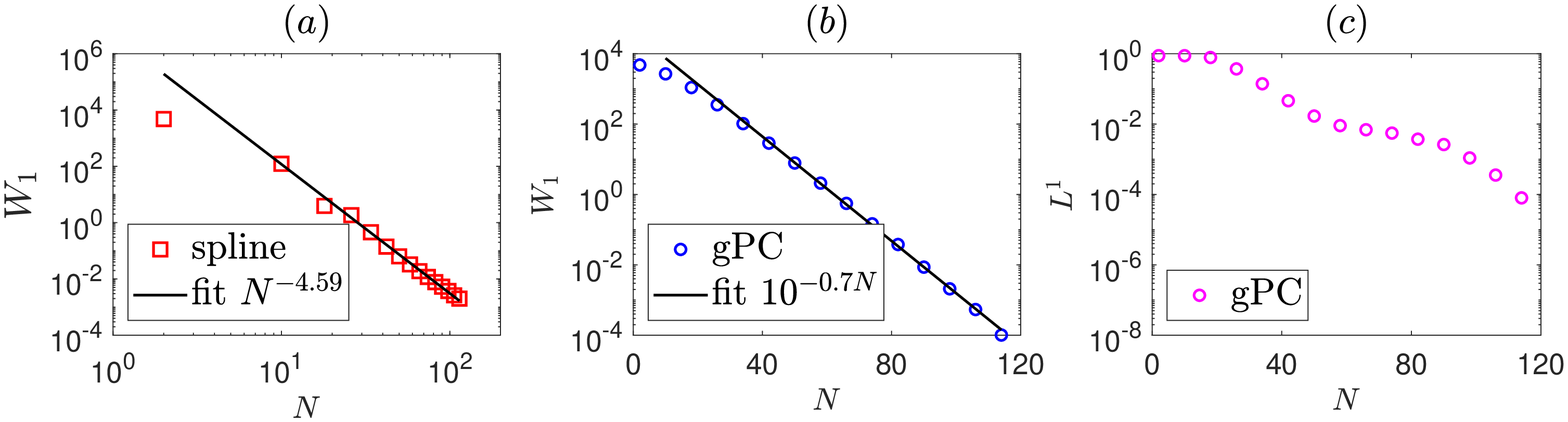}}
\caption{Approximation of $\mu = f_* \varrho$ where $f(\alpha)= \alpha/2 + \tanh(9\alpha)$ and~$\varrho $ is the uniform probability measure on $[-1,1]$. (a) $W_1(\mu, \nu)$ where $g$ is the spline interpolant of $f$ on a uniformly spaced grid (rectangles) and a polynomial fit~$\sim~N^{-4.59}$~(solid),  as predicted by Theorem \ref{thm:spline_was}. (b) Same, but where $g$ is the Collocation gPC approximation of $f$ (circles) and an exponential fit~$\sim~10^{-0.7N}$~(solid), as predicted by Theorem \ref{thm:gpc}. (c)~$L^1$ error of the PDFs using the collocation gPC method.}
\label{fig:w1_tanh}
\end{figure}

\paragraph{\bf Numerical example.} Let 
\begin{equation}\label{eq:f_numer}
f(\alpha)=\frac{\alpha}{2}+\tanh(9\alpha) \, , \quad \Omega = [-1,1] \, ,\quad  d\varrho(x) = \frac{1}{2}\,dx \, .
\end{equation}
We use a cubic spline interpolant on a grid of $N$ uniformly-spaced points, with the not-a-knot boundary condition \cite{deboor1978splines}. Theorem \ref{thm:spline_was} guarantees that in this case $W_p(\mu, \nu) \lesssim N^{-4}$. Indeed, Fig.~\ref{fig:w1_tanh}(a) shows the $W_1$ difference between the two measures as a function of $N$, and that the convergence rate is $N^{-4.59}$. 

\subsection{Generalized Polynomial Chaos (gPC)} Next, we turn to study $W_p$ convergence of $L^2$-spectral methods, for which PDF convergence is an open problem. We focus on the widely popular generalized Polynomial Chaos (gPC). 

{\bf Review of the Collocation gPC method.}  For a more detailed exposition, see e.g.,  \cite{gottlieb1977numerical, xiu2005colocation}.
Let the Jacobi polyomials $\{ p_n (x) \}_{n=0} ^{\infty }$ be the orthogonal polynomials with respect to $\varrho$, i.e., $p_n$ is a polynomial of degree $n$, and $\int _{-1}^{1} p_n (\alpha) p_m(\alpha) \, d\varrho(\alpha) = \delta _{n,m}$, see \cite{ szego1939orthogonal}  for details. This family of orthogonal polynomials constitutes an orthonormal basis of the space $ L^2(\Omega, \varrho)$, i.e., for every $f\in L^2$ one can expand
$$
f(\alpha ) = \sum\limits_{n=0}^{\infty } \hat{f} (n) p_n (\alpha ) \, , \qquad \hat{f}(n) := \int\limits_{\Omega} f(\alpha) p_n (\alpha)  \,d\varrho (\alpha) \, .$$
This expansion converges spectrally, i.e., if $f$ is in $C^{r}$, then $\{\hat{f} (n)\} \lesssim n^{-r}$, and if $f$ is analytic in an ellipse $E\subseteq \mathbb{C}$ that contains $[-1,1]$, then $|\hat{f} (n)| \lesssim e^{-\gamma n}$, for some $\gamma >0$. Thus, one has that  for such analytic functions $$\|f-\pi _N (f) \|_2 \lesssim e^{-\gamma N}  \, , \qquad \pi_N (f) :\,= \sum _{n=0}^N \hat{f}(n) p_n (\alpha) \, . $$

The expansion coefficients~$\{ \hat{f} (n)\}$ can be approximated using the Gauss quadrature
$$
\hat{f} (n)  \approx \hat{f}_N (n) :\,= \sum\limits_{j=1}^N f\left( \alpha _j  \right) p_n  \left( \alpha _j  \right) w_j ,\qquad n=0,1,\ldots, N-1 \, ,
$$
where~$\{ \alpha _j \}_{j=1}^{N}$ are the quadrature points, the distinct and real roots of $p_N(\alpha)$, and $w_j$ are the quadrature weights \cite{davis1967integration}. 
We define the gPC Collocation approximation $g_N$ to be the truncated expansion of $f$ with the quadrature-based coefficients $\hat{f}_N (n)$. We remark that this approximation method has a much simpler form --
The gPC collocation approximation is also the unique interpolating polynomial of $f$ of order $N-1$ at the quadrature points \cite{sagiv2018spline}. We remark that our theory can also be applied to Galerkin-gPC methods \cite{xiu2010numerical}. 

{\bf Density estimation in UQ:} The main appeal of the gPC method is its spectral $L^2$ convergence. As noted above, it is an open question whether this can be used to prove convergence of the PDFs, i.e., an upper bound on $p_{\mu} - p_{\nu}$ in some $L^p$. However,  Theorem \ref{thm:lq_wp} implies that spectral~$L^2$ convergence of $g_N$ to $f$ can yield fast convergence of $W_p (\mu ,\nu)$ for {\em any} $1\leq p < \infty$.

\begin{theorem}\label{thm:gpc}
Let $f$ be analytic in an ellipse in the complex plane that contains $[-1,1]$, and let $d\varrho (\alpha) = k (1-~\alpha)^{\beta _1} (1+~\alpha )^{\beta_2 } d\alpha$, for any $\beta _1 , \beta _2 \in \mathbb{R}$ and a proper normalization constant $k=k(\beta _1 ,\beta _2)$. Let $g(\alpha)$ be the collocation gPC approximation of $f$, i.e., the $N$-th order polynomial interpolant of $f$ at the respective Gauss quadrature points. Then, for every $p\geq 1$, $$W_p (\mu, \nu ) \lesssim e^{-\gamma N} \, , \qquad \|F_{\mu}-F_{\nu} \|_1 \lesssim e^{-\gamma N} \, , \qquad n\to \infty \, ,$$ where $\gamma$ does not depend on $N$.
\end{theorem}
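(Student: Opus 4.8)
\subsection*{Proof proposal}

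The plan is to reduce the Wasserstein bound to a weighted $L^2$-approximation estimate via Theorem \ref{thm:lq_wp}, and then to exploit the spectral convergence of the collocation gPC method for analytic integrands. Taking $q=2$ in Theorem \ref{thm:lq_wp} gives
\[
W_p(\mu,\nu) \lesssim \|f-g\|_\infty^{\frac{p}{2+p}}\,\|f-g\|_2^{\frac{2}{2+p}},
\]
so it suffices to show that $\|f-g\|_{L^2(\Omega,\varrho)} \lesssim e^{-\gamma N}$ while $\|f-g\|_\infty$ stays bounded uniformly in $N$. The latter is the easy ingredient: since $f$ is analytic on $[-1,1]$, the collocation interpolant $g=g_N$ converges to $f$ uniformly (a classical fact for Gauss-type nodes, where the Lebesgue constant grows only algebraically while the best polynomial error decays geometrically), so $\|g_N\|_\infty \le \|f\|_\infty + \|f-g_N\|_\infty$ is bounded and hence $\|f-g\|_\infty \le C$ for all $N$. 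Feeding both estimates into the displayed inequality yields $W_p(\mu,\nu) \lesssim e^{-\tilde\gamma N}$ with $\tilde\gamma = 2\gamma/(2+p) > 0$ independent of $N$, which is the first claim. I deliberately use only boundedness of $\|f-g\|_\infty$, to highlight that the natural $L^2$ convergence of gPC already drives the Wasserstein estimate.

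The core of the argument is therefore the $L^2$ estimate $\|f-g_N\|_2 \lesssim e^{-\gamma N}$, which I would obtain by splitting the error into a truncation part and an aliasing part,
\[
f - g_N = \bigl(f - \pi_{N-1}f\bigr) + \bigl(\pi_{N-1}f - g_N\bigr).
\]
For the first summand the excerpt already records that analyticity of $f$ forces the Jacobi coefficients to decay geometrically, $|\hat f(n)| \lesssim e^{-\gamma n}$, whence $\|f-\pi_{N-1}f\|_2 \lesssim e^{-\gamma N}$ by Parseval. For the second summand I would write $g_N = \sum_{n=0}^{N-1}\hat f_N(n)\,p_n$ with $\hat f_N(n)$ the $N$-point Gauss quadrature of $\hat f(n)$. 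Because the Gauss rule is exact on polynomials of degree $\le 2N-1$ and the $p_n$ are $\varrho$-orthonormal, one gets the aliasing identity $\hat f_N(n) - \hat f(n) = \sum_{m+n \ge 2N}\hat f(m)\,Q_N(p_m p_n)$, where $Q_N$ denotes the quadrature functional. Bounding the quadrature pairings and inserting $|\hat f(m)| \lesssim e^{-\gamma m}$ shows each aliasing coefficient is $O\bigl(e^{-\gamma(2N-n)}\bigr)$; summing over $0 \le n \le N-1$ through Parseval again gives $\|\pi_{N-1}f - g_N\|_2 \lesssim e^{-\gamma N}$. Combining the two parts proves the $L^2$ bound.

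Finally, the CDF statement is immediate: the Salvemini--Vallender identity $W_1(\mu,\nu) = \|F_\mu - F_\nu\|_1$ together with the case $p=1$ of the bound just established yields $\|F_\mu - F_\nu\|_1 \lesssim e^{-\gamma N}$. The main obstacle is the aliasing estimate for the collocation coefficients: while classical in the spectral-methods literature, it requires care in controlling the pairings $Q_N(p_m p_n)$ for high-degree Jacobi polynomials, which need not be uniformly bounded pointwise, and it is precisely here that the \emph{analyticity} of $f$ — rather than mere smoothness — is genuinely used, since only geometric decay of $\{\hat f(m)\}$ defeats the possible growth of those pairings.
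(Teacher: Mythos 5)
Your proposal is correct and follows essentially the same route as the paper: both feed a spectral $L^2$ error bound for the collocation gPC approximation together with a subexponentially growing (in your case, bounded) $L^\infty$ error into Theorem \ref{thm:lq_wp} with $q=2$, and obtain the CDF bound from the Salvemini--Vallender identity at $p=1$. The only difference is one of detail, not of strategy: where you reconstruct the $L^2$ estimate by hand via the truncation-plus-aliasing decomposition and control the $L^\infty$ factor via the Lebesgue constant at Gauss nodes, the paper simply cites the spectral-methods literature (Gottlieb--Orszag, Wang--Xiang, Trefethen) for precisely these two facts.
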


\begin{proof}
If $f$ is analytic, the truncated expansion has the exponential accuracy
$$\| f(\alpha ) - \sum_{n=0}^{N-1 } \hat{f} (n) p_n (\alpha ) \|_2  \lesssim  e^{-\gamma N} \, , \qquad N\gg 1 \, ,$$ for some constant $\gamma >0$ \cite{trefethen2013approximation, wang2012convergence, xiu2010numerical}. Next, since the collocation gPC is a spectrally accurate approximation of the polynomial projection in $L^2$ \cite{ gottlieb1977numerical}, then $\|f-g\|_2 \lesssim e^{-\gamma N}$ as well for $N\gg 1$. Finally, since $\|f-\pi _{N}(f) \|_{\infty}$ does not grow exponentially \cite{ gottlieb1977numerical}, Theorem \ref{thm:lq_wp} applies.
\end{proof}

Two particularly important cases of this theorem are when $\varrho$ is the Lebesgue measure, associated with the Legendre polynomials ($\beta _{1,2} = 0$) and the measure associated with the Chebyshev polynomials ($\beta _{1,2}= -1/2$). By Theorem \ref{thm:gpc}, the convergence of the Wasserstein metric stands in {\em sharp contrast} to that of the PDFs, i.e., of $\|p_{\mu}-p_{\nu}\|_{L^q}$. As previously noted, the convergence of the PDFs for the gPC method has not been proved, and might not be obtained at all for moderate values of $N$ \cite{sagiv2018spline}. It remains an open question whether Theorem \ref{thm:gpc} can be extended to measures with an unbounded support, such as the normal and the exponential distributions. Such a generalization might require a generalization of Theorem \ref{thm:lq_wp} to unbounded domains. We further note that Theorem~\ref{thm:gpc} can be extended to measures $\varrho '$ that are bounded from above by $\varrho$, see \cite{kats2017spectral} for details.
\paragraph{\bf Numerical example.} We approximate the same function $f$, as defined in \eqref{eq:f_numer}, and approximate it using polynomial interpolation at Gauss-Legendre quadrature points (see Sec.\ \ref{sec:gpc_pf}). Since $f$ is analytic, Theorem \ref{thm:gpc} guarantees that the gPC-based $\nu$ converges exponentially in $N$ to that of $\mu$, see Fig.\ \ref{fig:w1_tanh}(b). The convergence of the respective PDFs, on the other hand, is polynomial at best (see Fig.~\ref{fig:w1_tanh}(c)). Quantitatively, the $W_1$ error decreases by $8$ orders of magnitudes between $N=4$ and $N=120$, whereas the $L^1$ between the PDFs decreases by only $4$.

\subsection{Comparison to the histogram method}
This paper, as noted, is motivated by the following class of algorithms: to approximately characterize $\mu = f_* \varrho$, first approximate $f$ by $g$, and then approximate $\mu$ by $\nu =g_* \varrho$. How does this approach compare with more standard statistical methods?
We focus on one common nonparametric statistical density estimation method, the histogram method; Given i.i.d.\ samples from $\mu$, denoted by $f(\alpha _1)=y_1, \ldots , f(\alpha _N) =y_N$, and a partition of the range of~$f(\pmb{\alpha} )$ into $L$ disjoint intervals (bins) $\{ B_{\ell} \}_{\ell=1}^{L}$, the histogram estimator of the PDF is
$$p_{\rm hist}(y) :\,= \frac{1}{N}\sum\limits_{\ell =1}^L  \left( \#~\text{of samples for which}~y_j \in B_{\ell} \right)\cdot \mathbbm{1}_{ B_{\ell} } (y)  \, ,$$
where $\mathbbm{1}_{B_{\ell} }$ is the characteristic function of bin $B_{\ell}$ \cite{wasserman2013all}. The histogram methods is intuitive and easy to implement. What is then the advantage of approximation-based UQ methods? In Sec.~\ref{sec:hist}, using results by Bobkov and Ledoux \cite{bobkov2016one}, we 
prove that 
\begin{corollary}\label{cor:hist}
Under the conditions of Theorem \ref{thm:spline_was}, the $d$-dimensional, $m$-th order spline-based estimator of $\mu$ outperforms the histogram method on average in the $W_p$ sense when $d<2(m+1)$.
\end{corollary}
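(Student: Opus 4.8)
The plan is to recast the statement as a comparison of convergence rates under a common budget of $N$ evaluations of $f$, and to supply the rate of the histogram estimator from the one-dimensional empirical-measure theory of Bobkov and Ledoux \cite{bobkov2016one}. The crucial structural remark is that, although $\Omega$ is $d$-dimensional, both the target $\mu = f_* \varrho$ and its spline surrogate $\nu = g_* \varrho$ are measures on the real line. The spline side is therefore controlled deterministically by Theorem \ref{thm:spline_was}, which gives $W_p(\mu,\nu) \lesssim N^{-(m+1)/d}$, whereas the histogram side is controlled by the random fluctuation of a one-dimensional empirical measure built from $N$ i.i.d.\ samples of $\mu$.

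First I would make the histogram side precise. Writing $\mu_N = \frac1N \sum_{j=1}^N \delta_{y_j}$ for the empirical measure of the samples $y_j = f(\alpha_j)$ and $\mu_{\rm hist}$ for the measure with density $p_{\rm hist}$, redistribution of mass within bins gives $W_p(\mu_N,\mu_{\rm hist}) \le \max_\ell {\rm diam}(B_\ell)$. Because $f$ is continuous on the compact cube $[0,1]^d$, $\mu$ is compactly supported and non-degenerate, so the Bobkov--Ledoux estimates yield $\mathbb{E}[W_p(\mu_N,\mu)] \asymp N^{-1/2}$. For the corollary only the lower half of this is needed: combined with the bin-width bound and the monotonicity $W_1 \le W_p$, it produces $\mathbb{E}[W_p(\mu_{\rm hist},\mu)] \gtrsim N^{-1/2}$.

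The comparison is then immediate. For $d < 2(m+1)$ we have $(m+1)/d > 1/2$, so
\[
\frac{W_p(\mu,\nu)}{\mathbb{E}[W_p(\mu,\mu_{\rm hist})]} \lesssim \frac{N^{-(m+1)/d}}{N^{-1/2}} = N^{\frac12 - \frac{m+1}{d}} \xrightarrow[N\to\infty]{} 0 ,
\]
that is, the deterministic spline error is eventually negligible against the expected histogram error, which is exactly the claimed outperformance.

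The main obstacle is the robust justification of the lower bound $\mathbb{E}[W_p(\mu_{\rm hist},\mu)] \gtrsim N^{-1/2}$ under the hypotheses of Theorem \ref{thm:spline_was} alone, which do not force $\mu$ to be absolutely continuous — $\mu$ may carry atoms where $\nabla f$ vanishes — so the density-based Bobkov--Ledoux formulas cannot be applied verbatim. I would sidestep this by proving the bound first for $W_1$, where the Salvemini--Vallender identity $W_1(\mu,\mu_{\rm hist}) = \|F_\mu - F_{\mu_{\rm hist}}\|_1$ reduces it to the order-$N^{-1/2}$ fluctuation of the empirical distribution function (equivalently, positivity of the functional $\int \sqrt{F_\mu(1-F_\mu)}\,dx$ for non-degenerate $\mu$), and then lifting to all $p \ge 1$ via $W_1 \le W_p$. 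The remaining point is to confirm that the binning does not push the error below this rate: the estimate $W_p(\mu_{\rm hist},\mu) \ge W_p(\mu_N,\mu) - \max_\ell {\rm diam}(B_\ell)$ disposes of fine bins, while coarser bins can only increase the error.
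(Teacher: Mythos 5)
Your proposal uses the same three ingredients as the paper's proof --- the triangle inequality linking $\mu$, the empirical measure $\mu_{\rm emp}=\frac{1}{N}\sum_j \delta_{y_j}$ (your $\mu_N$), and $\mu_{\rm hist}$; the Bobkov--Ledoux $N^{-1/2}$ rate \cite{bobkov2016one}; and a bin-diameter bound on $W_p(\mu_{\rm emp},\mu_{\rm hist})$ --- but points the inequalities in the opposite direction. The paper only proves the \emph{upper} bound $\mathbb{E}W_p(\mu,\mu_{\rm hist})\lesssim N^{-1/2}$: it combines the Bobkov--Ledoux upper bound $\mathbb{E}W_p(\mu,\mu_{\rm emp})\lesssim N^{-1/2}$ with a deterministic bin-by-bin transport estimate $W_p(\mu_{\rm emp},\mu_{\rm hist})\lesssim N^{-(1+1/p)}$ (for bins of width $\sim N^{-1}$), and then reads ``outperforms'' as a comparison of guaranteed rates: $N^{-(m+1)/d}$ versus $N^{-1/2}$, whence $d<2(m+1)$. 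You instead aim at a \emph{lower} bound $\mathbb{E}W_p(\mu,\mu_{\rm hist})\gtrsim N^{-1/2}$, to be set against the spline's deterministic upper bound. Where it works, this is the logically stronger and more literal reading of ``outperforms'': comparing two upper bounds, as the paper does, cannot by itself certify that one method beats the other, so your route genuinely adds something.

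Two steps of your lower bound, however, fail as stated. First, ``coarser bins can only increase the error'' is false: take $f(\alpha)=\alpha_1$ and $\varrho$ Lebesgue on $[0,1]^d$, so that $\mu$ is uniform on $[0,1]$, and take the single bin $B_1=[0,1]$; then $p_{\rm hist}\equiv 1$ for every realization, so $\mu_{\rm hist}=\mu$ and the histogram error is exactly zero. Hence no $N^{-1/2}$ lower bound can hold for arbitrary binnings; you must restrict to bins of diameter $o(N^{-1/2})$ --- e.g.\ the width-$\sim N^{-1}$ bins the paper's proof uses --- where your estimate $W_p(\mu,\mu_{\rm hist})\ge W_p(\mu,\mu_{\rm emp})-\max_\ell \mathrm{diam}(B_\ell)$ does close the argument. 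Second, non-degeneracy of $\mu$ does not follow from continuity of $f$, as you assert: under the hypotheses of Theorem \ref{thm:spline_was}, $f$ may be constant, or $\varrho$ may be a point mass, in which case $\mu$ is a Dirac measure, $\int\sqrt{F_\mu(1-F_\mu)}\,dx=0$, and $\mathbb{E}W_1(\mu,\mu_{\rm emp})=0$, so the $N^{-1/2}$ lower bound is simply wrong there; non-degeneracy must be added as a hypothesis. (Relatedly, your claim $\mathbb{E}W_p(\mu_{\rm emp},\mu)\asymp N^{-1/2}$ is not correct as a two-sided bound for $p>1$ without density assumptions, but this is harmless since you use only the lower half, which does follow from $W_p\ge W_1$.) With these two repairs --- fine bins fixed at width $\sim N^{-1}$ and $\mu$ assumed non-degenerate --- your argument is correct and yields a sharper conclusion than the paper's own proof.
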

The average in this corollary refers to all i.i.d.\ realizations of $y_1, \ldots , y_N$ from $\mu$.
This corollary is an example of the so-called ``curse of dimensionality". To maintain a constant resolution and accuracy, the amount of data points (and hence the computational complexity) needs to increase exponentially with the dimension. Hence, above a certain dimension, it is preferable to ignore the underlying structure (i.e., the approximation of $f$ by $g$) and to consider only the empirical distribution of the i.i.d.\ samples $\{ f(\alpha _j) \}_{j=1} ^N$.

\begin{proof}
The error of spline interpolation is controlled by the following theorem
\begin{theorem*}[de Boor \cite{deboor1978splines} and Hall and Meyer \cite{hall1976bounds}]
Let $f\in C^{m+1} \left( \left[ \alpha_{\min} , \alpha_{\max} \right] \right)$, and let $g(\alpha)$ be its "not-a-knot", clamped or natural $m$-th spline interpolant. Then $$
\big\|\big(f(\alpha) -g(\alpha) \big) ^{(j)} \big\|_{L^{\infty} [\alpha_{\min}, \alpha_{\max} ] } \leq C_{\rm spl} ^{(j)}\left\| f^{(m+1)} \right\| _{\infty} h ^{m+1-j} \, , \qquad j=0,1,\ldots , m-1 \, ,$$
where $C_{\rm spl} ^{(j)}>0$ is a universal constant that depends only on the type of boundary condition and $j$, $m$, and $h = \max_{1<j\leq N} \lvert \alpha_j - \alpha _{j-1} \rvert$.
\end{theorem*}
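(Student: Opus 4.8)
The plan is to prove the interpolation bound by combining two ingredients: (i) the fact that the $m$-th order spline interpolation operator reproduces polynomials of degree $\leq m$, so that its error is governed by the local Taylor remainder, which is of size $O(h^{m+1}\|f^{(m+1)}\|_\infty)$; and (ii) the \emph{uniform stability} of the interpolation operator, i.e.\ that the map $f \mapsto g$ is bounded in $L^\infty$ by a constant independent of the mesh and of $h$. Once these are in hand, the $j=0$ estimate follows from a Bramble--Hilbert-type argument, and the derivative estimates $j=1,\ldots,m-1$ follow by Markov-type inequalities applied on each subinterval, each losing exactly one power of $h$.

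I would start with the concrete cubic case $m=3$, which is what Hall and Meyer treat and which already exhibits all the structural features. Write $g$ on each subinterval $[\alpha_{i-1},\alpha_i]$ in terms of its second-derivative moments $M_i = g''(\alpha_i)$; imposing $C^1$-continuity at the interior knots produces a tridiagonal linear system $A M = d$, in which the right-hand side $d$ is (up to a constant) a second divided difference of $f$ and $A$ has diagonal $2$ with off-diagonal row sum $1$, hence is strictly diagonally dominant with $\|A^{-1}\|_\infty \leq 1$ uniformly in the mesh. Setting $e = f-g$ and $E_i = e''(\alpha_i)$, a Taylor expansion of $f$ shows that $E$ solves the same system with a right-hand side of size $O(h^2\|f^{(4)}\|_\infty)$. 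The maximum principle for diagonally dominant systems then yields $\|E\|_\infty \leq C\,h^2\|f^{(4)}\|_\infty$, which controls $e''$ at the nodes and hence, since $e''$ is affine between nodal moments up to the interval remainder, everywhere.

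To pass from the nodal bound on $e''$ to the bounds on $e$ and $e'$, I would use that $e$ vanishes at all interpolation points. On each subinterval $e$ is $f$ minus a cubic, vanishing at both endpoints, so a Peano-kernel representation (integrating $e''$ twice against the Green's function of the two-point problem $e(\alpha_{i-1})=e(\alpha_i)=0$) gives $\|e\|_\infty \leq C h^4\|f^{(4)}\|_\infty$ and $\|e'\|_\infty \leq C h^3\|f^{(4)}\|_\infty$, recovering the powers $h^{m+1-j}$ for $m=3$. For the general $m$-th order spline I would replace the moment system by the B-spline collocation system $A c = (f(\alpha_i))_i$, where $g = \sum_j c_j B_j$; polynomial reproduction of degree $m$ plus the Bramble--Hilbert lemma reduces the error to the best local polynomial approximation, and the Markov inequality on polynomials of degree $m$ transfers the $L^\infty$ estimate to each derivative with the correct loss of a power of $h$.

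The main obstacle is establishing the mesh-independent stability of the interpolation operator, i.e.\ the uniform bound $\|A^{-1}\|_\infty \leq C$ with $C$ depending only on $m$ and on the type of boundary condition. For cubics this is elementary from the explicit diagonal dominance, but for general $m$ and arbitrary (nonuniform) meshes it is exactly the deep point --- it is the de Boor stability conjecture, ultimately settled by Shadrin --- and it is what prevents the constant $C_{\rm spl}^{(j)}$ from degenerating as the mesh becomes irregular. For the three specific boundary conditions named in the statement (not-a-knot, clamped, natural) the banded structure is tame enough that the required bound, and hence the explicit constants $C_{\rm spl}^{(j)}$, can be read off directly, which is the route Hall and Meyer follow.
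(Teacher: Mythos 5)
You should first note a mismatch of expectations: the paper does \emph{not} prove this statement. It is imported verbatim as a classical theorem of de Boor and of Hall and Meyer, cited and then used as a black box to deduce $W_p(\mu,\nu)\leq \|f-g\|_{\infty}\lesssim h^{m+1}$ in the proof of Theorem \ref{thm:spline_was}. So there is no internal proof to compare against; your sketch has to be judged against the classical literature, and against the statement as quoted. On that footing, your outline is essentially the standard textbook route (moment system plus diagonal dominance for $m=3$; polynomial reproduction plus uniform stability plus inverse estimates in general), and most of it is sound: the tridiagonal system with diagonal $2$ and off-diagonal row sum $1$ does give $\|A^{-1}\|_{\infty}\leq 1$ on arbitrary meshes, the residual of the nodal second-derivative error is $O(h^{2}\|f^{(4)}\|_{\infty})$ after the third-derivative terms cancel, and your identification of mesh-independent stability (de Boor's conjecture, settled by Shadrin) as the deep point for general $m$ is accurate. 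One local repair: the Markov inequality cannot be applied to $e=f-g$, which is not a piecewise polynomial. The standard fix is to insert a local Taylor polynomial $P$ on each subinterval and split $e^{(j)}=(f-P)^{(j)}+(P-g)^{(j)}$, bounding the first term by the Taylor remainder and the second by Markov applied to the polynomial $P-g$, whose sup-norm is already controlled by $\|f-P\|_{\infty}+\|f-g\|_{\infty}\lesssim h^{m+1}$. As written, your phrase ``transfers the $L^{\infty}$ estimate to each derivative'' skips this insertion.

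The genuine gap is the natural boundary condition, and it is a gap you inherit from the statement itself but then compound by asserting that all three named boundary conditions ``can be read off directly.'' A natural spline forces $g''(\alpha_{\min})=g''(\alpha_{\max})=0$, so in your own moment system the boundary data are $M_{0}=M_{N}=0$ while $f''(\alpha_{\min})$ is generically nonzero; the nodal error $E_{0}=f''(\alpha_{\min})$ is then $O(1)$, the maximum-principle argument yields only $\|e''\|_{\infty}=O(1)$ near the boundary, and integrating twice gives $\|e\|_{\infty}=O(h^{2})$, not $O(h^{m+1})$, in the sup norm over the whole interval (the interior is rescued only by the exponential decay of $A^{-1}$ away from the boundary rows). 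Hall and Meyer prove the sharp fourth-order bounds for the clamped (complete) cubic case, and de Boor covers not-a-knot; the claimed rate for natural splines is false without extra hypotheses such as $f''$ vanishing at the endpoints. So your plan goes through for clamped and, with more work, not-a-knot, but the step where you wave at all three boundary conditions at once would fail exactly at the natural case --- and this is worth flagging back to the paper, since the quoted theorem overstates the classical results in the same way.
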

This result is extended for higher dimensions using the the construction of tensor-product grid and tensor-product splines. The definitions here become more technical, and we refer to Schultz~\cite{schultz1969spline} for further detail.
We note that even in the multidimensional case, the error is still bounded by the spacing $h^{m+1-j}$. However, the {\em number} of grid points $N$ is proportional to $h^{-d}$ (this is the so-called curse of dimensionality which we previously mentioned). By the above error bounds, and by Theorem~\ref{thm:wass_gen}, we have that $W_p(\mu , \nu) \leq \|f-g\|_{\infty} \lesssim h^{m+1}$.
\end{proof}

\subsection{Proof of Corollary \ref{cor:hist}}\label{sec:hist}
\begin{proof}
Given $N$ i.i.d.\ from $\mu$, denoted by $y_1,\ldots ,y_N$, define the empirical distribution as $$\mu_{\rm emp} :\,= \frac{1}{N}\sum\limits_{j=1}^{N} \delta_{y_j} \, ,$$ where $\delta _y$ is the Dirac delta distribution centered at the point $y\in \mathbb{R}$. Under certain broad assumptions (see \cite{bobkov2016one} for details), $\mathbb{E}W_p (\mu ,\mu_{\rm emp}) \lesssim N^{-1/2}$, where the expectancy in these bounds is over all realizations of $y_1, \ldots, y_N$ with respect to the measure $\mu$ \cite{bobkov2016one}. 

By the triangle inequality and linearity of expectation, $$\mathbb{E}W_p(\mu, \mu_{\rm hist}) \leq \mathbb{E}\left[W_p (\mu, \mu_{\rm emp}) + W_p (\mu_{\rm emp}, \mu _{\rm hist} )\right] = \mathbb{E}W_p (\mu, \mu_{\rm emp}) + \mathbb{E} W_p (\mu_{\rm emp}, \mu _{\rm hist} ) \, ,$$
where $d\mu_{\rm hist}(y)=p_{\rm hist}(y) \,dy $ is the measure defined by the histogram estimator. It is therefore sufficient to show that $\mathbb{E} W_p ( \mu_{\rm emp} , \mu _{\rm hist}) \lesssim N^{-(1+1/p)}$ for any $ p\geq 1 $. We will prove a slightly stronger claim -- that $W_p ( \mu_{\rm emp} , \mu _{\rm hist}) \lesssim N^{-(1+1/p)}$ for {\em every} set of numbers $y_1, \ldots , y_N$. 

Let $\{B_{\ell}\}_{\ell=1} ^L$ be the bins of the histogram estimator and let $\mu_{{\rm emp},\ell}$ and $\mu_{{\rm hist},\ell}$ be the restriction of the measures $\mu _{\rm emp}$ and $\mu _{\rm hist}$ to $B_{\ell}$, respectively, for every $1\leq \ell \leq L$. By definition, there are exactly~$N\cdot \mu _{{\rm hist}} (B_{\ell})$ samples that fall into $B_{\ell}$, and so $\mu_{{\rm hist},\ell}  (B_{\ell}) = \mu_{{\rm emp},\ell}(B_{\ell})$.
Hence, the two measures $\mu_{{\rm emp},\ell}$ and $\mu_{{\rm hist},\ell}$ are comparable in the Wasserstein metric and we can write that
$$W_p ^p(\mu _{\rm emp} , \mu _{\rm hist}) \leq \sum_{\ell =1}^L W_p ^p(\mu _{{\rm emp},\ell} , \mu _{{\rm hist},\ell}) \, .$$ Since $\mu_{{\rm hist},\ell}$ is uniform on $B_{\ell}$ for any~$\ell$, the Wasserstein distance is the greatest if all of the samples in $ B_{\ell}$ are located on the extreme edge of the bin, i.e., if $y_j \in B_{\ell}$ then $y_j = a_{\ell}$, where we denote $B_{\ell} = [a_{\ell}, b_{\ell}]$. Hence, for every $1\leq \ell \leq L$,
\begin{align*}
W_p ^p(\mu_{{\rm emp},\ell} ,\mu_{{\rm hist},\ell}) &\leq \mu_{{\rm emp},\ell} (B_{\ell}) \int\limits_{a_{\ell}}^{b_{\ell}} (y-a_{\ell})^p \, dy \\
&= \frac{\mu_{{\rm emp},\ell}(B_{\ell})}{p+1} (b_{\ell} - a_{\ell})^{p+1} \, ,
\end{align*}
and so 
\begin{align*}
W_p ^p(\mu_{{\rm emp}},\mu_{{\rm hist}}) &\leq \sum\limits_{\ell=1}^{L} \frac{\mu_{{\rm emp},\ell}(B_{\ell})}{p+1} (b_{\ell} - a_{\ell})^{p+1} \\
&\lesssim N^{-(p+1)}\sum\limits_{\ell=1}^L \mu_{{\rm emp},\ell} (B_{\ell})\\
&= N^{-(p+1)}\sum\limits_{\ell=1}^L \mu_{{\rm emp}} (B_{\ell})=N^{-(p+1)}  \, ,
\end{align*}
where the second inequality is due to the partition, in which $(b_{\ell}-a_{\ell})\sim N^{-1}$, and the last equality holds since $\mu_{\rm emp}$ is a probability measure and since $\{B_{\ell} \}_{\ell =1}^L$ is a partition of its support.
\end{proof}

\section{Acknowledgments}
The author would like to thank S.\ Steinerberger for many useful comments and advice. This research was partially carried out during a stay of the author as a guest of R.R.\ Coifman and the Department of Mathematics at Yale University, whose hospitality is gratefully acknowledged.

\bibliographystyle{unsrt}

\end{document}